\theoremstyle{plain}
\theoremstyle{remark}
\numberwithin{equation}{section}
\numberwithin{figure}{section}
\theoremstyle{definition}
\newtheorem{lemma}{Lemma}[section]
\newtheorem{theorem}[lemma]{Theorem}\newtheorem{corollary}[lemma]{Corollary}\newtheorem{proposition}[lemma]{Proposition}\newtheorem{definition}[lemma]{Definition}\newtheorem{remark}[lemma]{Remark}\usepackage{times}
\title{Permutation orbifolds of vertex operator superalgebra and associative algebras}
\author{Chongying Dong}
\affil{Department of Mathematics, University of
California at Santa Cruz, Santa Cruz, CA {\rm 95064}, USA}
\author{Feng Xu }
\affil{Department of Mathematics, University of California at Riverside, Riverside, CA {\rm 92521}, USA }
\author{Nina Yu\footnote{Supported by National Natural Science Foundation of China  11971396, 12131018 and 12161141001}}
\affil{School of Mathematical Sciences, Xiamen University, Xiamen, Fujian {\rm 361005}, CHINA}
\begin{document}
\maketitle
\begin{abstract}
Let $V$ be a vertex operator superalgebra and $g=\left(1\ 2\ \cdots k\right)$
be a $k$-cycle which is viewed as an automorphism of the tensor product
vertex operator superalgebra $V^{\otimes k}$. In this paper, we construct
an explicit isomorphism from $A_{g}\left(V^{\otimes k}\right)$ to
$A\left(V\right)$ if $k$ is odd and to $A_{\sigma}\left(V\right)$
if $k$ is even where $\sigma$ is the canonical automorphism of $V$
of order 2 determined by the superspace structure of $V.$ These recover
previous results by Barron and Barron-Werf that there is a one-to-one
correspondence between irreducible $g$-twisted $V^{\otimes k}$-modules
and irreducible $V$-modules (resp. irreducible $\sigma$-twisted
$V$-modules) when $k$ is odd (resp. even). This explicit isomorphism
is expected to be useful in our further study on the Zhu algebra of
fixed point subalgebra.
\end{abstract}

\section{Introduction}

Let $V$ be a vertex operator superalgebra and $G$ be a finite automorphism
group of $V$. Then the space of $G$-invariants $V^{G}$ itself is
also a vertex operator superalgebra. \emph{Orbifold theory} studies
$V^{G}$ and its representation theory. The key ingredients in the
study of orbifold theory are the $g$-twisted $V$-modules for $g\in G$.
How to construct twisted modules in general is still a challenging
problem. Let $k$ be a fixed positive integer. The permutation orbifolds
study the representations of the tensor product vertex operator (super)algebra
$V^{\otimes k}$ with the natural action of the symmetric group $S_{k}$
as an automorphism group. A systematic study of permutation orbifolds
in the context of vertex operator algebras was started in \cite{BDM},
where a connection between twisted modules for tensor product vertex
operator algebra $V^{\otimes k}$ with respect to permutation automorphisms
and $V$-modules was found. Barron extended these results to $g$-twisted
$V^{\otimes k}$-modules for a vertex operator superalgebra $V$ and
$g=\left(1\ 2\cdots k\right)$ with $k$ odd. It was shown in \cite{Ba}
that the categories of weak, weak admissible and ordinary $g$-twisted
$V^{\otimes k}$-modules are isomorphic to the categories of weak,
weak admissible and ordinary $V$-modules, respectively. Moreover,
if $k$ is even, the category of weak $g$-twisted $V^{\otimes k}$-modules
is isomorphic to the category of $\sigma$-twisted $V$-modules \cite{BW}
where $\sigma$ is the canonical automorphism of $V$ of order $2$
arising from the structure of superspace. The $\sigma$-twisted $V$-modules
were called parity-twisted modules in literatures in \cite{BW}.

For a vertex operator algebra $V$, an associative algebra $A\left(V\right)$
was constructed and studied in \cite{Z} such that irreducible $V$-modules
are in one-to-one correspondence with irreducible $A\left(V\right)$-modules.
This work was extended to vertex operator superalgebras in \cite{KW}.
On the other hand, Zhu's work was extended in another direction. For
any finite order automorphism $g$ of $V$, an associative algebra
$A_{g}\left(V\right)$ was constructed such that irreducible $g$-twisted
$V$-modules are in one-to-one correspondence with irreducible $A_{g}\left(V\right)$-modules
\cite{DLM1}. For any vertex operator superalgebra $V$ and for any
finite order automorphism $g$ of $V$, an associative algebra $A_{g}\left(V\right)$
was constructed in \cite{X}. For any $g$-twisted $V$-module, $A_{g}\left(V\right)$-module
was associated and conversely, for any $A_{g}\left(V\right)$-module
a $g$-twisted $V$-module was constructed. Furthermore, there is
one-to-one correspondence between irreducible $g$-twisted $V$-modules
and irreducible $A_{g}\left(V\right)$-modules, and if $V$ is $g$-rational,
then $A_{g}\left(V\right)$ is semisimple and there are finitely many
irreducible $g$-twisted $V$-modules up to isomorphism \cite{DZ}.
In our forthcoming papers, we shall use this explicit isomorphism
to compute the Zhu algebra of fixed point subalgebra.

Let $V$ be a vertex operator superalgebra and $g=\left(1\ 2\cdots k\right)\in S_{k}$
be an automorphism of $V^{\otimes k}.$ From \cite{DZ,Ba,BW} we know
that $A\left(V\right)$ should be isomorphic to $A_{g}\left(V^{\otimes k}\right)$
\emph{(resp. $A_{\sigma}\left(V\right)$)} if $k$ is odd \emph{(resp.
even)}. Our goal in this paper is to give an explicit isomorphism
from $A_{g}\left(V^{\otimes k}\right)$ to $A\left(V\right)$ \emph{(resp.
$A_{\sigma}\left(V\right)$)} when $k$ is odd \emph{(resp. even)}.
Consequently, we give a new proof that there is a one-to-one correspondence
between irreducible $g$-twisted $V^{\otimes k}$-modules and irreducible
$V$-modules \emph{(resp. $\sigma$-twisted $V$-modules}) when $k$
is odd\emph{ (resp. even)} \cite{DZ,Ba,BW}.

The paper is organized as follows: In Section 2, we recall some basic
notions and properties in vertex operator superalgebra theory. In
Section 3, we recall the associative algebra $A_{g}\left(V\right)$
associated to a vertex operator superalgebra $V$. In Section 4, we
first recall properties of permutation orbifold vertex operator superalgebra
and $\Delta_{k}$-operator. Then we use the $\Delta_{k}$-operator
to construct an explicit isomorphism between $A_{g}\left(V^{\otimes k}\right)$
and $A\left(V\right)$ \emph{(resp. $A_{\sigma}\left(V\right))$}
when $k$ is odd (\emph{resp. even}).

\section{\label{sec:Basics}Basics}

In this section we review some basics of vertex operator superalgebras
\cite{Bo,FLM,DL}. Throughout this paper, $z_{0},z_{1},z_{2},$ etc.
are independent commuting formal variables.

\emph{A super vector space} is a $\mathbb{Z}_{2}$-graded vector space
$V=V_{\bar{0}}\oplus V_{\bar{1}}.$ The elements in $V_{\bar{0}}$
(resp. $V_{\bar{1}}$) are called even (resp. odd). Let $\left|v\right|$
be $0$ if $v\in V_{\bar{0}}$ and $1$ if $v\in V_{\bar{1}}.$

\begin{definition} A \emph{vertex operator superalgebra} is a $\frac{1}{2}\mathbb{Z}_{+}$-graded
super vector space $V=V_{\bar{0}}\oplus V_{\bar{1}}$ with $V_{\bar{0}}=\sum_{n\in\mathbb{Z}}V_{n}$
and $V_{\bar{1}}=\sum_{n\in\frac{1}{2}+\mathbb{Z}}V_{n}$ satisfying
$\dim V_{n}<\infty$ for all $n$ and $V_{m}=0$ if $m$ is sufficiently
small. And $V$ is equipped with a linear map
\begin{align*}
 & V\to\text{\ensuremath{\left(\text{End}V\right)\left[\left[z,z^{-1}\right]\right]},}\\
 & v\mapsto Y\left(v,z\right)=\sum_{n\in\mathbb{Z}}v_{n}z^{-n-1}\ (v_{n}\in\text{End}V),
\end{align*}
and with two distinguished vector ${\bf 1}\in V_{0}$, $\omega\in V_{2}$
satisfying the following conditions: For $u,v\in V,$ and $m,n\in\mathbb{Z},i\in\frac{1}{2}\mathbb{Z},$
\begin{alignat*}{1}
 & u_{n}v=0\ \text{for}\ n\ \text{sufficiently\ large;}\\
 & Y\left({\bf 1},z\right)=\text{Id}_{V};\\
 & Y\left(v,z\right){\bf 1}\in V\left[\left[z\right]\right]\ \text{and}\ \lim_{z\to0}Y\left(v,z\right){\bf 1}=v;\\
 & \left[L\left(m\right),L\left(n\right)\right]=\left(m-n\right)L\left(m+n\right)+\frac{1}{12}\left(m^{3}-m\right)\delta_{m+n,}c;\\
 & \frac{d}{dz}Y\left(v,z\right)=Y\left(L\left(-1\right)v,z\right);\\
 & L\left(0\right)|_{V_{i}}=i\
\end{alignat*}
where $Y\left(\omega,z\right)=\sum_{n\in\mathbb{Z}}L\left(n\right)z^{-n-2}$
and the Jocobi identity holds:
\begin{gather*}
z_{0}^{-1}\left(\frac{z_{1}-z_{2}}{z_{0}}\right)Y\left(u,z_{1}\right)Y\left(v,z_{2}\right)-\left(-1\right)^{\left|u\right|\left|v\right|}z_{0}^{-1}\delta\left(\frac{z_{2}-z_{1}}{-z_{0}}\right)Y\left(v,z_{2}\right)Y\left(u,z_{1}\right)\\
=z_{2}^{-1}\delta\left(\frac{z_{1}-z_{0}}{z_{2}}\right)Y\left(Y\left(u,z_{0}\right)v,z_{2}\right),
\end{gather*}
where $\delta\left(z\right)=\sum_{n\in\mathbb{Z}}z^{n}$ and $\left(z_{i}-z_{j}\right)^{n}$
is expanded as a formal power series in $z_{j}$.

\end{definition}

In the following, we will denote such a vertex operator superalgebra
by $V=\left(V,Y,\textbf{1},\omega\right)$. If $V_{\bar{1}}=0$, $V$
is a vertex operator algebra.

\begin{remark} \label{tensor product VOSA} If $\left(V_{i},Y_{i},\textbf{1}_{i},\omega_{i}\right)$,
$1\le i\le k$ are vertex operator superalgebras, let $V=V_{1}\otimes V_{2}\otimes\cdots\otimes V_{k}$,
$\mathbf{1}={\bf 1}_{1}\otimes{\bf 1}_{2}\otimes\cdots\otimes{\bf 1}_{k}$
, $\omega=\omega_{1}\otimes\mathbf{1}_{2}\otimes\cdots\otimes\mathbf{1}_{k}+{\bf 1}_{1}\otimes\omega_{2}\otimes{\bf 1}_{3}\otimes\cdots\otimes{\bf 1}_{k}+{\bf 1}_{1}\otimes\cdots\otimes{\bf 1}_{k-1}\otimes\omega_{k},$
and set
\begin{alignat}{1}
 & Y\left(u_{1}\otimes u_{2}\otimes\cdots\otimes u_{k},z\right)\left(v_{1}\otimes v_{2}\otimes\cdots\otimes v_{k}\right)\nonumber \\
= & \left(-1\right)^{\sum_{i=2}^{k}\left|u_{i}\right|\left(\left|v_{1}\right|+\cdots+\left|v_{i-1}\right|\right)}Y_{1}\left(u_{1},z\right)v_{1}\otimes Y_{2}\left(u_{2},z\right)v_{2}\otimes\cdots\otimes Y_{k}\left(u_{k},z\right)v_{k}\label{tensor product-super sign}
\end{alignat}
for $u_{i},v_{i}\in V_{i},$ ~$1\le i\le k$. Then $\left(V,Y,\mathbf{1},\omega\right)$
is a vertex operator superalgebra, which is called a \textit{tensor
product vertex operator superalgebra}. \end{remark}

\begin{definition} Let $V$ be a vertex operator superalgebra. An
\emph{automorphism} $g$ of $V$ is a linear automorphism of $V$
preserving $\omega$ such that $gY\left(v,z\right)g^{-1}=Y\left(gv,z\right)$
for $v\in V$. \end{definition} Note that any automorphism of $V$
preserves each homogeneous space $V_{n}$ and hence preserves $V_{\bar{0}}$
and $V_{\bar{1}}.$ Denote by $\text{Aut}\left(V\right)$ the group
of automorphisms of $V$. Let $g\in\text{Aut}\left(V\right)$ be an
automorphism of order $k<\infty$. Then $V$ is a direct sum of the
eigenspaces $V^{j}$ of $g$,
\[
V=\oplus_{j\in\mathbb{Z}/k\mathbb{Z}}V^{j},
\]
where $V^{j}=\left\{ v\in V\mid gv=\eta^{j}v\right\} $ for $\eta$
a fixed primitive $k$-th root of unity.

\begin{definition} Let $\left(V,Y,\textbf{1},\omega\right)$ be a
vertex operator superalgebra. A \emph{weak g-twisted $V$-module }is
a $\mathbb{Z}_{2}$-vector space $M=M^{\bar{0}}\oplus M^{\bar{1}}$
equipped with a linear map
\begin{align*}
Y_{g}(\cdot,z):\  & V\to({\rm End}\;M)\left[\left[z^{1/k},z^{-1/k}\right]\right]\\
 & v\mapsto Y_{g}\left(v,z\right)=\sum_{n\in\frac{1}{k}\mathbb{Z}}v_{n}^{g}z^{-n-1},
\end{align*}
with $v_{n}^{g}\in\left({\rm End}\;M\right)^{\left|v\right|}$ such
that for $u,v\in V$ and $w\in M$ the following conditions hold:

(1) For any $v\in V,\ w\in M$, $v_{n}^{g}w=0$ for $n$ sufficiently
large;

(2) $Y_{g}({\bf 1},z)=\text{Id}_{M}$ (the identity operator on $M$);

(3) For $\mathbb{Z}_{2}$-homogeneous $u,v\in V$, the following Jacobi
identity holds
\begin{gather*}
z_{0}^{-1}\left(\frac{z_{1}-z_{2}}{z_{0}}\right)Y_{g}(u,z_{1})Y_{g}(v,z_{2})-(-1)^{|u||v|}z_{0}^{-1}\delta\left(\frac{z_{2}-z_{1}}{-z_{0}}\right)Y_{g}(v,z_{2})Y_{g}(u,z_{1})\\
=\frac{z_{2}^{-1}}{k}\sum_{j\in\mathbb{Z}/k\mathbb{Z}}\delta\left(\eta^{j}\frac{\left(z_{1}-z_{0}\right)^{1/k}}{z_{2}^{1/k}}\right)Y_{g}\left(Y\left(g^{j}u,z_{0}\right)v,z_{2}\right).
\end{gather*}
\end{definition}

Note that the above generalized twisted Jacobi identity is equivalent
to
\begin{gather*}
z_{0}^{-1}\left(\frac{z_{1}-z_{2}}{z_{0}}\right)Y_{g}(u,z_{1})Y_{g}(v,z_{2})-(-1)^{|u||v|}z_{0}^{-1}\delta\left(\frac{z_{2}-z_{1}}{-z_{0}}\right)Y_{g}(v,z_{2})Y_{g}(u,z_{1})\\
=z_{2}^{-1}\delta\left(\frac{z_{1}-z_{0}}{z_{2}}\right)^{-r/k}\delta\left(\frac{z_{1}-z_{0}}{z_{2}}\right)Y_{g}\left(Y\left(u,z_{0}\right)v,z_{2}\right)
\end{gather*}
for $u\in V^{r},$ $r=0,\cdots,k-1.$ This implies that for $v\in V^{r}$,
\[
Y_{g}\left(v,z\right)=\sum_{n\in r/k+\mathbb{Z}}v_{n}^{g}z^{-n-1}
\]
and for $v\in V,$
\[
Y_{g}\left(gv,z\right)=\lim_{z^{1/k}\to\eta^{-1}z^{1/k}}Y_{g}\left(v,z\right).
\]

A weak $1$-twisted $V$-module is called a \emph{weak $V$-module}.

\begin{definition} An\emph{ admissible $g$-twisted $V$-module }is
a weak $g$-twisted $V$-module $M$ equipped with a $\frac{1}{2k}\mathbb{N}$-grading
$M=\oplus_{n\in\frac{1}{2k}\mathbb{N}}M(n)$ such that for homogeneous
$v\in V,$ $n\in\frac{1}{2k}\mathbb{N}$ and $m\in\frac{1}{k}\mathbb{Z}$,
\[
v_{m}^{g}M(n)\subset M(n+\text{wt}v-m-1).
\]
\end{definition}

One may assume $M\left(0\right)\not=0$. An admissible $1$-twisted
$V$-module is called an\emph{ admissible $V$-module}.

\begin{definition} A \emph{$g$-twisted $V$-module } is a weak $g$-twisted
$V$-module $M$ satisfying the conditions that $M=\oplus_{h\in\mathbb{C}}M_{(h)}$,
where
\[
M_{(h)}=\left\{ w\in M\mid L(0)^{g}w=hw\right\} ,\text{for}\ L\left(0\right)^{g}=\omega_{1}^{g},
\]
$\dim M_{(h)}<\infty$ for all $h\in\mathbb{C}$, and $M_{(h+\frac{n}{2k})}=0$
for fixed $h\in\mathbb{C}$ and for all sufficiently small integers
$n$.\end{definition}

A $1$-twisted $V$-module is called a \emph{$V$-module}.

\section{The associative algebras}

In this section, we recall the associative algebras associated to
a vertex operator superalgebra.

Define an automorphism $\sigma$ of $V$ such that $\sigma|V_{\bar{0}}=1$
and $\sigma|V_{\bar{1}}=-1$. Then $\sigma$ is a central element
of $\text{Aut}\left(V\right).$ Fix $g\in\text{Aut}\left(V\right)$
of order $T_{0}$. Let $o\left(g\sigma\right)=T$. Then $V$ can be
decomposed into sum of eigenspaces with respect to the actions of
$g$ and $g\sigma$ as follows:
\begin{alignat*}{1}
 & V=\oplus_{r\in\mathbb{Z}/T_{0}Z}V^{r}\\
 & V=\oplus_{r\in\mathbb{Z}/TZ}V^{r*}
\end{alignat*}
where $V^{r}=\left\{ v\in V\mid gv=e^{2\pi ir/T_{0}}v\right\} $ and
$V^{r\ast}=\left\{ v\in V\mid g\sigma v=e^{2\pi ir/T}v\right\} $.
Let $r$ be an integer between $0$ and $T_{0}-1$ (or $T-1$). For
homogeneous $u\in V^{r\ast}$, set
\[
\delta_{r}=\begin{cases}
1 & \text{if}\ r=0\\
0 & \text{if}\ r\not=0.
\end{cases}
\]
For $v\in V,$ define
\[
u\circ_{g}v=\text{Res}_{z}\frac{\left(1+z\right)^{\text{wt}u-1+\delta_{r}+\frac{r}{T}}}{z^{1+\delta_{r}}}Y\left(u,z\right)v.
\]
Set
\[
O_{g}\left(V\right)=\text{span}\left\{ u\circ_{g}v\mid u,v\in V\right\} .
\]
Define the linear space $A_{g}\left(V\right)$ to be the quotient
$V/O_{g}\left(V\right).$ For short we will denote the image of $v$
in $A_{g}\left(V\right)$ by $[v].$

\begin{remark} (1) When $g=1,$ the $u\circ_{g}v$, $O_{g}\left(V\right)$
and $A_{g}\left(V\right)$ are just $u\circ v,$ $O\left(V\right)$
and $A\left(V\right)$ defined in \cite{KW}. We will use the same
notations in this case. If $V$ is a vertex operator algebra, $A_{g}\left(V\right)$
was constructed in \cite{DLM1}.

(2) When $g=\sigma$, the only eigenspace of $g\sigma$ is $V$ itself
with eigenvalue 1. For $u,v\in V$, $u\circ_{\sigma}v=\text{Res}_{z}\frac{\left(1+z\right)^{\text{wt}u}}{z^{2}}Y\left(u,z\right)v.$
We will use
\[
A_{\sigma}\left(V\right)=V/O_{\sigma}\left(V\right),
\]
where $O_{\sigma}\left(V\right)=\text{span}\left\{ u\circ_{\sigma}v\mid u,v\in V\right\} $.

\end{remark}

Now we define a product $\ast_{g}$ which induces an associative product
in $A_{g}\left(V\right)$. For homogeneous $u\in V^{r\ast}$, $v\in V$
and $0\le r\le T-1$. Set
\[
u\ast_{g}v=\begin{cases}
\text{Res}_{z}\left(Y\left(u,z\right)\frac{\left(1+z\right)^{\text{wt}u}}{z}v\right) & \text{if}\ r=0\\
0 & \text{if}\ r>0.
\end{cases}
\]
Extend $\ast_{g}$ linearly to obtain a bilinear product on $V$.
Then the restriction of $\ast_{g}$ to $V^{0\ast}$ coincides with
that of \cite{KW}. If $g=1$, we use $\ast$ instead of $\ast_{g}$.
The following results are obtained in \cite{DZ}:

\begin{theorem} \label{A_g(V) main results} (1) If $r\not=0$, then
$V^{r\ast}\subseteq O_{g}\left(V\right).$

(2) $A_{g}\left(V\right)$ is an associative algebra under $\ast_{g}$
with identity $\left[\textbf{1}\right]$ and central element $\left[\omega\right]$
where $[u]$ is the image of $u\in V$ in $A_{g}(V).$

(3) There is a bijection between simple $A_{g}\left(V\right)$-modules
and irreducible admissible $g$-twisted $V$-modules.

\end{theorem}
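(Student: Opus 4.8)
The plan is to establish the three parts in turn, directly from the definitions.

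\emph{Part (1).} For homogeneous $u\in V^{r*}$ with $r\neq 0$ one has $\delta_r=0$, so
\[
u\circ_g\mathbf 1=\mathrm{Res}_z\,\frac{(1+z)^{\mathrm{wt}\,u-1+r/T}}{z}\,Y(u,z)\mathbf 1 .
\]
Since $Y(u,z)\mathbf 1=\sum_{n\ge 0}(u_{-n-1}\mathbf 1)z^{n}\in V[[z]]$ with $u_{-1}\mathbf 1=u$, the residue picks out only the $n=0$ term, whose coefficient is $\binom{\mathrm{wt}\,u-1+r/T}{0}=1$; hence $u\circ_g\mathbf 1=u$, so $u\in O_g(V)$. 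Because $g\sigma$ preserves the weight grading, every vector of $V^{r*}$ is a finite sum of such homogeneous vectors, and therefore $V^{r*}\subseteq O_g(V)$. Consequently, in what follows one may restrict attention to the summand $V^{0*}$.

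\emph{Part (2).} I would follow the Zhu--Dong--Li--Mason scheme, carrying the sign $(-1)^{|u||v|}$ and the twist datum $\delta_r+r/T$ throughout. The technical backbone is a family of membership lemmas asserting that
\[
\mathrm{Res}_z\,\frac{(1+z)^{\mathrm{wt}\,u-1+\delta_r+\frac rT+n}}{z^{1+\delta_r+m}}\,Y(u,z)v\in O_g(V)\qquad(m\ge n\ge 0),
\]
proved by induction on $m$ starting from $u\circ_g v$, using $\frac{d}{dz}Y(u,z)=Y(L(-1)u,z)$ together with the relation $v\circ_g\mathbf 1=(L(-1)+L(0))v\in O_g(V)$ for $v\in V^{0*}$. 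From these lemmas one derives in the usual way: (i) $O_g(V)$ is a two-sided ideal for $\ast_g$, so $\ast_g$ descends to $A_g(V)$; (ii) associativity modulo $O_g(V)$, by rewriting both $(u\ast_g v)\ast_g w$ and $u\ast_g(v\ast_g w)$ through the iterate $Y(Y(u,z_0)v,z_2)$ and the super Jacobi identity; (iii) $\mathbf 1\ast_g v=v$ and $v\ast_g\mathbf 1\equiv v\pmod{O_g(V)}$, so $[\mathbf 1]$ is the identity; and (iv) the commutator formula $u\ast_g v-(-1)^{|u||v|}v\ast_g u\equiv\mathrm{Res}_z(1+z)^{\mathrm{wt}\,u-1}Y(u,z)v\pmod{O_g(V)}$ for $u,v\in V^{0*}$. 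Applying (iv) with $u=\omega$ (even, weight $2$) gives $\omega\ast_g v-v\ast_g\omega\equiv\mathrm{Res}_z(1+z)Y(\omega,z)v=(L(-1)+L(0))v\equiv 0$, so $[\omega]$ is central.

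\emph{Part (3).} In the forward direction, let $M=\bigoplus_n M(n)$ be an irreducible admissible $g$-twisted $V$-module normalized so that $M(0)\neq 0$. From $v^g_mM(n)\subseteq M(n+\mathrm{wt}\,v-m-1)$, the unique mode of a homogeneous $v$ preserving $M(0)$ is $o(v):=v^g_{\mathrm{wt}\,v-1}$; the membership lemmas of Part (2) force $o(O_g(V))$ to act as $0$ on $M(0)$, and the twisted Jacobi identity gives $o(u)o(v)=o(u\ast_g v)$ there, so $M(0)$ becomes an $A_g(V)$-module, simple since $M$ is irreducible. Conversely, given a simple $A_g(V)$-module $U$, I would build the universal (generalized Verma) admissible $g$-twisted module: form the $\tfrac1k\mathbb Z$-graded Lie superalgebra attached to $V\otimes t^{1/k}\mathbb C[t^{1/k},t^{-1/k}]$ modulo the $L(-1)$-derivative relations, let its non-negatively-graded part act on $U$ through $A_g(V)$ and its negative part by $0$, induce up to a module $\bar M(U)$, and take $L(U)$ to be the quotient by the largest graded submodule meeting the degree-$0$ piece trivially. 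One then verifies that $L(U)$ is a genuine admissible $g$-twisted $V$-module (the content being the twisted Jacobi identity), that $L(U)(0)\cong U$ — again via the $O_g(V)$-lemmas, to see the degree-$0$ piece survives — and that $M\mapsto M(0)$ and $U\mapsto L(U)$ are mutually inverse on isomorphism classes.

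The real obstacle is the reverse construction in Part (3): proving the twisted Jacobi identity for $L(U)$ when both the $\tfrac1k$-fractional mode structure and the superalgebra signs are present, which is where the fractional powers $\eta^j(z_1-z_0)^{1/k}z_2^{-1/k}$ in that identity interact delicately with the $(-1)^{|u||v|}$ bookkeeping. Part (2), while lengthy, is essentially formal once the membership lemmas are in hand, and those same residue identities are exactly what forces $o(O_g(V))=0$ on the bottom level and $\bar M(U)$ to be nonzero in degree $0$; so it pays to prove the sharpest form of those lemmas at the outset.
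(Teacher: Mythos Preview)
The paper does not prove this theorem at all: it is quoted verbatim from \cite{DZ}, with no argument given. Your sketch is precisely the Zhu--Dong--Li--Mason scheme as carried out in \cite{DZ} for the super/twisted setting --- the computation $u\circ_g\mathbf 1=u$ for $r\neq 0$, the identity $(L(-1)+L(0))v\in O_g(V)$ on $V^{0*}$, the family of residue-membership lemmas driving associativity and the ideal property, and the generalized-Verma construction for the inverse direction --- so there is nothing substantive to compare; your outline matches the approach of the cited source, including your correct identification of the twisted Jacobi identity for $L(U)$ as the place where the real work lies.
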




\section{The isomorphisms between associative algebras}

In this section, we first recall some general notions about permutation
orbifold. In Subsection 4.1 we construct an explicit isomorphism between
$A_{g}\left(V^{\otimes k}\right)$ and $A$$\left(V\right)$ with
$k$ an odd number. In Subsection 4.2, for even number $k$ we give
the isomorphism between $A_{g}\left(V^{\otimes k}\right)$ and $A_{\sigma}\left(V\right)$.

Let $V=\left(V,Y,\textbf{1},\text{\ensuremath{\omega}}\right)$ be
a vertex operator superalgebra and $k$ be a fixed positive integer.
Then by Remark \ref{tensor product VOSA}, $V^{\otimes k}$ is also
a vertex operator superalgebra, and the permutation group $S_{k}$
acts naturally on $V^{\otimes k}.$ For instance, let $g=\left(1\ 2\cdots k\right).$
Then
\begin{alignat}{1}
g: & V\otimes V\otimes\cdots\otimes V\to V\otimes V\otimes\cdots\otimes V\nonumber \\
 & v_{1}\otimes v_{2}\otimes\cdots\otimes v_{k}\mapsto\left(-1\right)^{\left|v_{1}\right|\left(\left|v_{2}\right|+\cdots+\left|v_{k}\right|\right)}v_{2}\otimes v_{3}\otimes\cdots\otimes v_{k}\otimes v_{1}.\label{g action}
\end{alignat}

Let $n\in\left\{ 1,2,\cdots,k\right\} $ and $a_{1},\cdots,a_{n}\in\left\{ 1,2,\cdots,k\right\} $
be $n$ different numbers. For $v_{1},\cdots,v_{n}\in V$, in the
rest of this paper we denote the vector in $V^{\otimes k}$ whose
$a_{j}$-th tensor factor is $v_{j}$ and whose other tensor factors
are ${\bf 1}$ by $x_{v_{1},\cdots,v_{n}}^{a_{1},\cdots,a_{n}}$.
We call a linear combination of vectors of the form $x_{v_{1},\cdots,v_{n}}^{a_{1},\cdots,a_{n}}$
a \textit{$n$-tensor vector}. In particular, we will denote $x_{u}^{a}={\bf 1}\otimes\cdots\otimes{\bf 1}\otimes u\otimes{\bf 1}\otimes\cdots\otimes{\bf 1}$
by $u^{a}$ which is a $1$-tensor vector whose $a$-th tensor factor
is $u$ and whose other tensor factors are ${\bf 1}$.

We set $\tau_{1,1}=1$ and for $1\le i\le k$, $2\le n\le k$, set

\begin{equation}
\tau_{i,n}:=\begin{cases}
\left(-1\right)^{\left|v_{i}\right|\left(\sum_{s\in\{1,\cdots,n\}\setminus\{i\}}\left|v_{s}\right|\right)} & \text{if}\ i\le n\\
1, & \text{if}\ i>n.
\end{cases}\label{sign function}
\end{equation}
Then (\ref{g action}) implies that for any $1\le n\le k$, $1\le a\le k$,
\[
g^{a}x_{v_{1},\cdots,v_{n}}^{1,\cdots,n}=\prod_{i=1}^{a}\tau_{i,n}x_{v_{1},\cdots,v_{n}}^{1-a+k,\cdots,n-a+k},
\]
where $m-a+k$ is understood to be $m-a$ if $m-a>0$. In particular,
$g^{a}\left(u^{1}\right)=u^{1-a+k}$, $1\le a\le k$.

Following \cite{BDM}, an operator $\Delta_{k}\left(z\right)=\Delta_{k}^{V}\left(z\right)\in\left(\text{End}V\right)\left[\left[z^{1/2k},z^{-1/2k}\right]\right]$
on $V$ was defined in \cite{Ba,BW} by
\[
\Delta_{k}\left(z\right)=\exp\left(\sum_{j\in\mathbb{Z}_{+}}a_{j}z^{\frac{j}{k}}L\left(j\right)\right)\left(k^{\frac{1}{2}}\right)^{-2L\left(0\right)}\left(z^{\frac{1}{2k}\left(k-1\right)}\right)^{-2L\left(0\right)}
\]
where $a_{j}\in\mathbb{C}$ for $j\in\mathbb{Z}_{+}$ are uniquely
determined by
\[
\exp\left(-\sum_{j\in\mathbb{Z}_{+}}a_{j}x^{j+1}\frac{\partial}{\partial x}\right)\cdot x=\frac{1}{k}\left(1+x\right)^{k}-\frac{1}{k}.
\]
The following property of $\Delta_{k}\left(z\right)$ is proved in
\cite{Ba}.

\begin{proposition} In $\left(\text{End}V\right)\left[\left[z^{1/2k},z^{-1/2k}\right]\right]$,
we have
\[
\Delta_{k}^{V}\left(z\right)Y\left(u,z_{0}\right)\Delta_{k}^{V}\left(z\right)^{-1}=Y\left(\Delta_{k}^{V}\left(z+z_{0}\right)u,\left(z+z_{0}\right)^{1/k}-z^{1/k}\right),
\]
for all $u\in V$.

\end{proposition}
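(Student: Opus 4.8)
The plan is to reduce the identity to the well-known conjugation formula for $\Delta_k(z)$ acting on $Y(\cdot,z_0)$, which in turn follows from how $\Delta_k(z)$ is built out of $L(0)$ and the $L(j)$ with $j>0$. I would first recall the two elementary conjugation rules that $\Delta_k(z)$ encodes. On one hand, the operator $\exp\bigl(\sum_{j\in\Z_+}a_jz^{j/k}L(j)\bigr)$ conjugates $Y(u,z_0)$ into $Y$ of the transformed vector evaluated at the coordinate obtained by applying the corresponding change of variable; this is precisely the content of the formal calculus identity built into the definition of the $a_j$, namely $\exp\bigl(-\sum a_j x^{j+1}\partial_x\bigr)\cdot x=\tfrac1k(1+x)^k-\tfrac1k$. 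On the other hand, the scaling operators $(k^{1/2})^{-2L(0)}$ and $(z^{(k-1)/2k})^{-2L(0)}$ conjugate $Y(u,z_0)$ into a rescaled vertex operator via the standard rule $a^{L(0)}Y(u,z_0)a^{-L(0)}=Y(a^{L(0)}u,az_0)$ for a formal parameter (or nonzero scalar) $a$.

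Next I would combine these. Writing $\Delta_k(z)=E(z)\,S(z)$ with $E(z)=\exp\bigl(\sum a_jz^{j/k}L(j)\bigr)$ and $S(z)=(k^{1/2})^{-2L(0)}(z^{(k-1)/2k})^{-2L(0)}=k^{-L(0)}z^{-(k-1)L(0)/k}$, I compute $S(z)Y(u,z_0)S(z)^{-1}=Y\bigl(S(z)u,\,k^{-1}z^{-(k-1)/k}z_0\bigr)$. Then I apply the $E(z)$-conjugation to this; the change of variable encoded by $E(z)$ (with the substitution $x\mapsto z^{-1/k}\cdot(\text{variable})$ accounting for the $z^{j/k}$ factors) sends the coordinate $w$ to $z\bigl((1+z^{-1/k}w)^k-1\bigr)/k$ up to the bookkeeping of powers of $z$, and simultaneously transforms the vector by $E(z)$. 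Carefully tracking the composite substitution should collapse the inner coordinate $k^{-1}z^{-(k-1)/k}z_0$ together with the $E(z)$-substitution into exactly $(z+z_0)^{1/k}-z^{1/k}$, while the vector $u$ gets transformed by the full composite $E(z+z_0)\,S(z+z_0)=\Delta_k(z+z_0)$ — the shift from $z$ to $z+z_0$ in the argument of $\Delta_k$ being the algebraic manifestation of the translation $z_0\mapsto z+z_0$ in the formal change of variables.

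The main obstacle I expect is the bookkeeping of the half-integer and $k$-th-root powers of $z$: one must work consistently in $(\text{End}\,V)[[z^{1/2k},z^{-1/2k}]]$, verify that all fractional powers appearing in the intermediate steps actually lie in this ring, and confirm that the formal substitutions (in particular $(z+z_0)^{1/k}$ expanded as a power series in $z_0$) are well defined and associative under composition. A clean way to handle this is to invoke the general framework for such conjugation formulas — e.g. the results on $\Delta$-operators and change-of-variable automorphisms from \cite{BDM,Ba,BW} — rather than redo the formal calculus from scratch; indeed this Proposition is quoted from \cite{Ba}, so I would present the proof as: reduce to the $E(z)$ and $S(z)$ pieces, cite the standard $L(0)$-conjugation and the formal-calculus exponentiation lemma, and then verify by direct (but routine) substitution that the composite coordinate change is $(z+z_0)^{1/k}-z^{1/k}$ and the composite vector transformation is $\Delta_k(z+z_0)$. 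The verification that the two sides agree as elements of the relevant ring of formal series, once the substitutions are matched, is then a formal identity requiring no further input beyond the defining relation for the $a_j$.
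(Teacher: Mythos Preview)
The paper does not supply a proof of this proposition at all; it simply states the result and attributes it to \cite{Ba}. So there is no argument in the paper to compare your proposal against.

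That said, your outline is essentially the strategy used in \cite{Ba} (which in turn follows \cite{BDM}): factor $\Delta_k(z)=E(z)S(z)$ with $S(z)=k^{-L(0)}z^{-(k-1)L(0)/k}$ and $E(z)=\exp\bigl(\sum_{j\ge1}a_jz^{j/k}L(j)\bigr)$, handle the $S(z)$-conjugation by the elementary rule $a^{L(0)}Y(u,z_0)a^{-L(0)}=Y(a^{L(0)}u,az_0)$, and handle the $E(z)$-conjugation via the formal change-of-variable lemma that ties exponentials of positive Virasoro modes to the defining relation $\exp\bigl(-\sum a_jx^{j+1}\partial_x\bigr)\cdot x=\tfrac1k(1+x)^k-\tfrac1k$. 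Your identification of the bookkeeping of fractional powers of $z$ and of the well-definedness of $(z+z_0)^{1/k}$ as the genuine technical point is also accurate; in \cite{BDM,Ba} this is absorbed into a general lemma on such $\Delta$-operators rather than redone by hand. In short, your plan matches the literature's proof, and the paper itself offers nothing beyond the citation.
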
 In particular, we have
\[
\Delta_{k}\left(1\right)=\exp\left(\sum_{j\in\mathbb{Z}_{+}}a_{j}L\left(j\right)\right)k^{-L\left(0\right)},
\]
which will play an important role in the construction of the isomorphism
between $A_{g}\left(V^{\otimes k}\right)$ and $A\left(V\right)$
(resp. $A_{\sigma}\left(V\right)$) for odd number $k$ (resp. even
number $k$).

\subsection{\label{odd k isomorphism}The isomorphism between $A_{g}\left(V^{\otimes k}\right)$
and $A$$\left(V\right)$ with $k$ odd }

In this subsection, we construct an isomorphism between $A_{g}\left(V^{\otimes k}\right)$
and $A\left(V\right)$ as associative algebras where $k$ is an odd
number. We will first find out the generating set of $A_{g}\left(V^{\otimes k}\right),$
then construct an isomorphism from $A_{g}\left(V^{\otimes k}\right)$
to $A\left(V\right)$.

\begin{lemma} \label{n-tensor to n-1-tensor-odd} Let $g=\left(1\ 2\cdots k\right)$
with $k$ odd. Then $A_{g}\left(V^{\otimes k}\right)$ is spanned
by $[\sum_{a=1}^{k}v^{a}]$ for $v\in V_{\bar{0}}$.

\end{lemma}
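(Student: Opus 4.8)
The plan is to reduce an arbitrary vector of $V^{\otimes k}$ modulo $O_g(V^{\otimes k})$ in two stages: first show that every class is represented by a $1$-tensor vector, i.e.\ by some $\sum_a \lambda_a u^a$; then show that the cyclic symmetry forced by $g$ collapses this to a single symmetrized sum $[\sum_{a=1}^k v^a]$, and finally that only the even part of $V$ contributes. Throughout I would work with the eigenspace decomposition $V^{\otimes k} = \oplus_r (V^{\otimes k})^{r*}$ with respect to $g\sigma$; by Theorem~\ref{A_g(V) main results}(1), $(V^{\otimes k})^{r*} \subseteq O_g(V^{\otimes k})$ for $r \neq 0$, so it suffices to analyze the $(g\sigma)$-fixed part. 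Since $g$ has order $k$ and $\sigma$ has order $\le 2$ with $k$ odd, $g\sigma$ has order $k$ or $2k$; the key point is that a monomial $x_{v_1,\dots,v_n}^{a_1,\dots,a_n}$ with $n \ge 2$ generically does not lie in an eigenspace of $g\sigma$, so to land in $(V^{\otimes k})^{0*}$ one must pass to $g$-orbit sums, which is exactly where the spanning reduction bites.

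The main engine for the first stage is the standard Zhu-algebra trick adapted to the twisted setting: using the definition of $\circ_g$ and the recursion it generates (expand $Y(u,z)v$ and take residues against $(1+z)^{\mathrm{wt}\,u - 1 + \delta_r + r/T}/z^{1+\delta_r}$), one shows that in $A_g(V^{\otimes k})$ any product-type vector can be pushed down in ``tensor degree.'' Concretely, I would argue that for $n \ge 2$, a class $[x_{v_1,\dots,v_n}^{a_1,\dots,a_n}]$ can be rewritten, modulo $O_g$, in terms of classes of $(n-1)$-tensor vectors: choose one slot, say $a_1$, write the vector as $(v_1)^{a_1}$ acted on by the vertex operator of the remaining $(n-1)$-tensor vector via the tensor-product formula~(\ref{tensor product-super sign}), and use that $(v_1)^{a_1} \circ_g (\text{rest})$ together with the identity $[u]\ast_g[v] - (-1)^{|u||v|}[v]\ast_g[u] \equiv [\text{Res}_z Y(u,z)(1+z)^{\mathrm{wt}\,u-1}v]$-type relations to trade the two-slot vector for lower-degree ones plus elements of $O_g$. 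Since $\mathbf 1$ in the untouched slots acts as the identity, this is genuinely a reduction in the number of nontrivial tensor factors. Iterating down to $n=1$ shows $A_g(V^{\otimes k})$ is spanned by classes of $1$-tensor vectors $[u^a]$.

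For the second stage I would use the $g$-equivariance recorded just before the lemma, $g^a(u^1) = u^{1-a+k}$, together with the fact that $g$ acts as an algebra automorphism whose action is already accounted for inside $A_g$: more precisely, for a $g$-twisted setup the relations in $O_g(V^{\otimes k})$ include the consequences of the twisted Jacobi identity that force $[u^a]$ and $[u^{a'}]$ to be related. The cleanest route is to observe that $u^a - g(u^{a})= u^a - u^{a-1+k}$ lies in $O_g$ (this is the twisted analogue of $(L(-1)+\mathrm{wt})$-type relations, or can be seen from $u^a \circ_g \mathbf 1$ computations once one notes that the $g\sigma$-eigenvalue of $u^a$ is nontrivial unless one averages), hence $[u^a] = [u^{a'}]$ for all $a,a'$, so $[u^a] = \tfrac1k[\sum_{b=1}^k u^b]$. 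Then note that if $u \in V_{\bar 1}$ the symmetrized vector $\sum_b u^b$ has odd parity in $V^{\otimes k}$ while lying (or not) in the $0$-eigenspace of $g\sigma$; checking the $g\sigma$-eigenvalue of $\sum_b u^b$ for $u \in V_{\bar1}$ shows it sits in a nonzero eigenspace, hence in $O_g(V^{\otimes k})$ by Theorem~\ref{A_g(V) main results}(1), leaving only $v \in V_{\bar0}$. This gives the claimed spanning set $[\sum_{a=1}^k v^a]$, $v \in V_{\bar0}$.

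\textbf{Main obstacle.} The delicate step is the first-stage reduction from $n$-tensor to $(n-1)$-tensor classes: one must choose the right element $u \in (V^{\otimes k})^{r*}$ and the right power of $(1+z)$ so that $u\circ_g v$ actually contains the desired $n$-tensor monomial with an invertible scalar coefficient, and one must control the super signs $\tau_{i,n}$ from~(\ref{sign function}) and the eigenvalue bookkeeping for $g\sigma$ (which is where $k$ odd is used — it guarantees $\sigma$ factors through the $g$-eigenvalue decomposition cleanly and that no parasitic half-integer gradings obstruct the argument). I expect the bulk of the real work to be a careful induction establishing that $[x_{v_1,\dots,v_n}^{a_1,\dots,a_n}] \in \mathrm{span}\{[(n-1)\text{-tensor vectors}]\} + O_g(V^{\otimes k})$, with the symmetrization and the $V_{\bar 0}$-restriction following comparatively formally from the $g$-equivariance relations.
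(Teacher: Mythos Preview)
Your outline has the right ingredients---the eigenspace decomposition for $h=g\sigma$, Theorem~\ref{A_g(V) main results}(1) to kill nonzero eigenspaces, and a tensor-degree reduction via $\circ_g$---but both the order of the argument and the core mechanism of the reduction need repair.

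First, the order should be inverted. You propose to reduce arbitrary $n$-tensor vectors to $1$-tensor vectors and only then symmetrize, but the reduction step only makes sense for eigenvectors of $h$: the formula for $u\circ_g v$ depends on which eigenspace $u$ lies in, and the product lands in the $0*$-eigenspace only when the eigenvalues of $u$ and $v$ are complementary. The workable version of the argument therefore first invokes Theorem~\ref{A_g(V) main results}(1) to pass to $h$-symmetrized sums $\sum_{a} h^a x$, observes that for $k$ odd this already annihilates all odd-parity tensors (so the $V_{\bar 0}$ restriction comes for free at the start, not at the end), and only then carries out the degree reduction among symmetrized $n$-tensor vectors. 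Your second-stage claim that $u^a - g(u^a)\in O_g$ is not a relation in $O_g$ in any direct sense; what is true is that $u^a$ minus its projection onto $(V^{\otimes k})^{0*}$ lies in $O_g$, which is just Theorem~\ref{A_g(V) main results}(1) again.

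Second, and more seriously, the tensor-degree reduction is not a single ``trade.'' When you form
\[
\Bigl(\sum_{a}\xi^{-(a-1)s}h^{a-1}u^{1}\Bigr)\circ_g\Bigl(\sum_{b}\xi^{(b-1)s}h^{b-1}x_{v_1,\dots,v_{n-1}}^{1,\dots,n-1}\Bigr)
\]
for a fixed $s\neq 0$, the resulting relation in $A_g(V^{\otimes k})$ does not isolate a single $n$-tensor vector: it expresses a linear combination $\sum_{j=1}^{k-n+1}\eta^{(n+j-2)s}\,y_j$ of \emph{several} symmetrized $n$-tensor vectors $y_j$ (indexed by the relative position of the inserted slot) in terms of an $(n-1)$-tensor quantity $u_s$. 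One relation is not enough to solve for any $y_j$. The idea you are missing is to let $s$ run over all nonzero eigenvalues: this produces a Vandermonde-type system in the $y_j$, which is invertible, and only then can each $y_j$ be written in terms of $(n-1)$-tensor vectors. Without this inversion step the induction on $n$ does not close.
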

\begin{proof}
Let $h=g\sigma.$ Then $o\left(g\sigma\right)=2k$. By Theorem \ref{A_g(V) main results},
$A_{g}\left(V^{\otimes k}\right)$ is spanned by vectors of form $\sum_{a=1}^{2k}h^{a}x_{v_{1},\cdots,v_{n}}^{a_{1},\cdots,a_{n}}$
modulo $O_{g}(V)$ for $v_{j}\in V$, $j=1,\cdots,n$ and $n=1,\cdots,k$.
Note that $\sigma\left(x_{v_{1},\cdots,v_{n}}^{a_{1},\cdots,a_{n}}\right)=(-1)^{i}x_{v_{1},\cdots,v_{n}}^{a_{1},\cdots,a_{n}}$
for $x_{v_{1},\cdots,v_{n}}^{a_{1},\cdots,a_{n}}\in\left(V^{\otimes k}\right)_{\overline{i}}$
with $i=0,1.$ It is easy to check that
\begin{alignat*}{1}
\sum_{a=1}^{2k}h^{a}x_{v_{1},\cdots,v_{n}}^{a_{1},\cdots,a_{n}} & =\begin{cases}
2\sum_{a=1}^{k}g^{a}x_{v_{1},\cdots,v_{n}}^{a_{1},\cdots,a_{n}} & \text{for\ }x_{v_{1},\cdots,v_{n}}^{a_{1},\cdots,a_{n}}\in\left(V^{\otimes k}\right)_{\bar{0}}\\
0 & \text{for}\ x_{v_{1},\cdots,v_{n}}^{a_{1},\cdots,a_{n}}\in\left(V^{\otimes k}\right)_{\bar{1}}.
\end{cases}
\end{alignat*}
Therefore, $A_{g}\left(V^{\otimes k}\right)$ is spanned by vectors
of form $\sum_{a=1}^{k}g^{a}x_{v_{1},\cdots,v_{n}}^{a_{1},\cdots,a_{n}}$
modulo $O_{g}(V)$ with $x_{v_{1},\cdots,v_{n}}^{a_{1},\cdots,a_{n}}\in\left(V^{\otimes k}\right)_{\bar{0}}$.

Let $n=2,\cdots,k$. We will prove that any vector of the form $\sum_{a=1}^{k}g^{a}\left(x_{v_{1},\cdots,v_{n}}^{a_{1},\cdots,a_{n}}\right)$
with $x_{v_{1},\cdots,v_{n}}^{a_{1},\cdots,a_{n}}\in\left(V^{\otimes k}\right){}_{\bar{0}}$
can be reduced to a $\left(n-1\right)$-tensor vector modulo $O_{g}(V)$
and hence by induction $A_{g}\left(V^{\otimes k}\right)$ is spanned
by vectors of the form\textcolor{red}{{} }$\sum_{a=1}^{k}v^{a}$,
$v\in V_{\bar{0}}$. We only give a proof for $(a_{1},\cdots,a_{n})=(1,\cdots,n)$
and the proof for general case is similar. Let $\xi=e^{\frac{\pi i}{k}}$.
Note that the eigenspace of $h$ with eigenvalue $\xi^{s}$ is given
by
\[
\left(V^{\otimes k}\right)^{s\ast}=\left\{ \sum_{a=1}^{2k}\xi^{-\left(a-1\right)s}h^{a}x_{v_{1},\cdots,v_{k}}^{1,\cdots,k}\mid v_{1},\cdots,v_{k}\in V\right\} ,
\]
where $0\le s\le2k-1$. We will use the fact that for any $\sum_{a=1}^{2k}\xi^{-\left(a-1\right)s}h^{a-1}u^{1}\in\left(V^{\otimes k}\right)^{s\ast}$
and $\sum_{b=1}^{2k}\xi^{\left(b-1\right)s}h^{b-1}x_{v_{1},\cdots,v_{n-1}}^{1,\cdots,n-1}\in\left(V^{\otimes k}\right)^{\left(2k-s\right)\ast}$
with $s=1,\cdots,2k-1,$
\[
0\equiv\left(\sum_{a=1}^{2k}\xi^{-\left(a-1\right)s}h^{a-1}u^{1}\right)\circ_{g}\left(\sum_{b=1}^{2k}\xi^{\left(b-1\right)s}h^{b-1}x_{v_{1},\cdots,v_{n-1}}^{1,\cdots,n-1}\right).
\]

(1) Let $u^{1}\in\left(V^{\otimes k}\right)_{\bar{0}}$ and $x_{v_{1},\cdots,v_{n-1}}^{1,\cdots,n-1}\in\left(V^{\otimes k}\right)_{\bar{0}}$.
Then we have $\sigma\left(u^{1}\right)=u^{1}$ and $\sigma\left(x_{v_{1},\cdots,v_{n-1}}^{1,\cdots,n-1}\right)=x_{v_{1},\cdots,v_{n-1}}^{1,\cdots,n-1}.$
Therefore
\begin{alignat}{1}
\sum_{a=1}^{2k}\xi^{-\left(a-1\right)s}h^{a-1}u^{1} & =\sum_{a=1}^{2k}\xi^{-\left(a-1\right)s}g^{a-1}u^{1}\nonumber \\
 & =\sum_{a=1}^{k}\left(1+\left(-1\right)^{s}\right)\xi^{-\left(a-1\right)s}u^{2-a+k}\nonumber \\
 & =\begin{cases}
0 & \text{if\ }s\ \text{is\ odd}\\
\sum_{a=1}^{k}2\xi^{-\left(a-1\right)s}u^{2-a+k} & \text{if\ }s\ \text{is\ even,}
\end{cases}\label{1-tensor even}
\end{alignat}

and
\begin{alignat*}{1}
\sum_{b=1}^{2k}\xi^{\left(b-1\right)s}h^{b-1}x_{v_{1},\cdots,v_{n-1}}^{1,\cdots,n-1}= & \sum_{b=1}^{2k}\xi^{\left(b-1\right)s}g^{b-1}x_{v_{1},\cdots,v_{n-1}}^{1,\cdots,n-1}\\
= & \sum_{b=1}^{k}\xi^{\left(b-1\right)s}\left(1+\left(-1\right)^{s}\right)g^{b-1}x_{v_{1},\cdots,v_{n-1}}^{1,\cdots,n-1}\\
= & \begin{cases}
0 & \text{if\ }s\ \text{is\ odd}\\
2\sum_{b=1}^{k}\xi^{\left(b-1\right)s}g^{b-1}x_{v_{1},\cdots,v_{n-1}}^{1,\cdots,n-1} & \text{if\ }s\ \text{is\ even}
\end{cases}\\
= & \begin{cases}
0 & \text{if\ }s\ \text{is\ odd}\\
2\sum_{b=1}^{k}\xi^{bs}\prod_{i=1}^{b}\tau_{i,n-1}x_{v_{1},v_{2},\cdots,v_{n-1}}^{1-b+k,2-b+k,\cdots,n-1-b+k} & \text{if\ }s\ \text{is\ even.}
\end{cases}
\end{alignat*}
Now for $1\le s\le2k-1$ with $s$ even, we have
\begin{alignat*}{1}
0 & \equiv\left(\sum_{a=1}^{2k}\xi^{-\left(a-1\right)s}h^{a-1}u^{1}\right)\circ_{g}\left(\sum_{b=1}^{2k}\xi^{\left(b-1\right)s}h^{b-1}x_{v_{1},\cdots,v_{n-1}}^{1,\cdots,n-1}\right)\\
 & \equiv\left(2\sum_{a=1}^{k}\xi^{-\left(a-1\right)s}u^{2-a+k}\right)\circ_{g}\left(2\sum_{b=1}^{k}\xi^{bs}\prod_{i=1}^{b}\tau_{i,n-1}x_{v_{1},v_{2},\cdots,v_{n-1}}^{1-b+k,2-b+k,\cdots,n-1-b+k}\right)\\
 & =4\text{Res}_{z}Y\left(\sum_{a=1}^{k}\xi^{-\left(a-1\right)s}u^{2-a+k},z\right)\left(\sum_{b=1}^{k}\xi^{bs}\prod_{i=1}^{b}\tau_{i,n-1}x_{v_{1},v_{2},\cdots,v_{n-1}}^{1-b+k,2-b+k,\cdots,n-1-b+k}\right)\frac{\left(1+z\right)^{\text{wt}u-1+\frac{s}{k}}}{z}\\
 & =4\sum_{a=1}^{k}\sum_{b=1}^{k}\prod_{i=1}^{b}\tau_{i,n-1}\xi^{\left(b-a+1\right)s}\text{Res}_{z}Y\left(u^{2-a+k},z\right)\left(x_{v_{1},v_{2},\cdots,v_{n-1}}^{1-b+k,2-b+k,\cdots,n-1-b+k}\right)\frac{\left(1+z\right)^{\text{wt}u-1+\frac{s}{k}}}{z}\\
 & =4\sum_{b=1}^{k}\ \sum_{2-a\notin\left\{ 1-b,2-b,\cdots,n-1-b\right\} }\prod_{i=1}^{b}\tau_{i,n-1}\xi^{\left(b-a+1\right)s}x_{u,v_{1},\cdots,v_{n-1}}^{2-a+k,1-b+k,2-b+k,\cdots,n-1-b+k}\\
 & +4\sum_{b=1}^{k}\prod_{i=1}^{b}\tau_{i,n-1}\sum_{j=1}^{n-1}\xi^{\left(j-1\right)s}x_{v_{1},\cdots,v_{j-1},v_{j}',v_{j+1},\cdots,v_{n-1}}^{1-b+k,\cdots,j-1-b+k,j-b+k,j+1-b+k,\cdots,n-1-b+k},
\end{alignat*}
where $v_{j}'=\text{Res}_{z}Y\left(u,z\right)v_{j}\frac{\left(1+z\right)^{\text{wt}u-1+\frac{s}{k}}}{z}.$
For $1\le t\le k-n+1,1\le s\le k-1$, set
\[
y_{t}=\sum_{b=1}^{k}g^{b-1}x_{u,v_{1},\cdots,v_{n-1}}^{n-1+t,1,\cdots,n-1}
\]
and
\[
u_{s}=-\sum_{b=1}^{k}\prod_{i=1}^{b}\tau_{i,n-1}\sum_{j=1}^{n-1}\xi^{\left(j-1\right)s}x_{v_{1},\cdots,v_{j-1},v_{j}',v_{j+1},\cdots,v_{n-1}}^{1-b+k,\cdots,j-1-b+k,j-b+k,j+1-b+k,\cdots,n-1-b+k}.
\]
Then
\begin{align*}
 & \sum_{b=1}^{k}\sum_{2-a\notin\left\{ 1-b,2-b,\cdots,n-1-b\right\} }\prod_{i=1}^{b}\tau_{i,n-1}\xi^{\left(b-a+1\right)s}x_{u,v_{1},\cdots,v_{n-1}}^{2-a+k,1-b+k,2-b+k,\cdots,n-1-b+k}\\
= & \sum_{j=1}^{k-n+1}\xi^{(n+j-2)s}y_{j}\equiv u_{s}.
\end{align*}
Since $s$ is even, the system $\sum_{j=1}^{k-n+1}\xi^{(n+j-2)s}y_{j}\equiv u_{s},$
$1\le s\le2k-1$ is equivalent to $\sum_{j=1}^{k-n+1}\eta^{(n+j-2)t}y_{j}\equiv u_{t},$
$1\le t\le k-1$ where $\eta=\xi^{2}=e^{\frac{2\pi i}{k}}.$ It is
clear that the linear system has a solution such that each $n$-tensor
vector $y_{t}\in\left(V^{\otimes k}\right)_{\bar{0}}^{0\ast}$ is
a linear combination of $\left(n-1\right)$-tensor vector $u_{s}\in\left(V^{\otimes k}\right)_{\bar{0}}$
for all $t$. In particular, when $n=2$,
\begin{gather}
y_{j}=\sum_{b=1}^{k}g^{b-1}x_{u,v_{1}}^{1+j,1},j=1,\cdots,k-1;u,v_{1}\in V_{\bar{0}};\nonumber \\
u_{t}=-\sum_{b=1}^{k}x_{v'_{1}}^{1-b+k}=-\sum_{a=1}^{k}x_{v'_{1}}^{a},t=1,2,\cdots,k-1;v'_{1}\in V_{\bar{0}}.\label{odd u_t expresssion}
\end{gather}

(2) Let $u^{1}\in\left(V^{\otimes k}\right)_{\bar{1}}$ and $x_{v_{1},\cdots,v_{n-1}}^{1,\cdots,n-1}\in\left(V^{\otimes k}\right)_{\bar{1}}.$
Then $\sigma\left(u^{1}\right)=-u^{1}$ and $\sigma\left(x_{v_{1},\cdots,v_{n-1}}^{1,\cdots,n-1}\right)=-x_{v_{1},\cdots,v_{n-1}}^{1,\cdots,n-1}.$
Then by similar arguments, we obtain
\[
\sum_{a=1}^{2k}\xi^{-\left(a-1\right)s}h^{a-1}u^{1}=\begin{cases}
0 & \text{if\ }s\ \text{is\ even}\\
2\sum_{a=1}^{k}\xi^{-\left(a-1\right)s}\left(-1\right)^{a-1}u^{k-a+2} & \text{if\ }s\ \text{is\ odd}
\end{cases}
\]
and
\begin{equation}
\sum_{b=1}^{2k}\xi^{\left(b-1\right)s}h^{b-1}x_{v_{1},\cdots,v_{n-1}}^{1,\cdots,n-1}=\begin{cases}
0 & \text{if\ }s\ \text{is\ even}\\
2\sum_{b=1}^{k}\xi^{bs}\left(-1\right)^{b}\prod_{i=1}^{b}\tau_{i,n-1}x_{v_{1},\cdots,v_{n-1}}^{1-b+k,2-b+k,\cdots,n-1-b+k} & \text{if\ }s\ \text{is\ odd.}
\end{cases}\label{n-1 tensor s odd nonzero}
\end{equation}
Now for $1\le s\le2(k-n+1)-1$ with $s$ odd, we have
\begin{alignat*}{1}
0\equiv & \left(\sum_{a=1}^{2k}\xi^{-\left(a-1\right)s}h^{a-1}u^{1}\right)\text{\ensuremath{\circ}}_{g}\left(\sum_{b=1}^{2k}\xi^{\left(b-1\right)s}h^{b-1}x_{v_{1},\cdots,v_{n-1}}^{1,\cdots,n-1}\right)\\
= & \text{Res}_{z}Y\left(2\sum_{a=1}^{k}\xi^{-\left(a-1\right)s}\left(-1\right)^{a-1}u^{k-a+2},z\right)\\
 & \cdot\left(2\sum_{b=1}^{k}\xi^{bs}\left(-1\right)^{b}\prod_{i=1}^{b}\tau_{i,n-1}x_{v_{1},\cdots,v_{n-1}}^{1-b+k,2-b+k,\cdots,n-1-b+k}\right)\frac{\left(1+z\right)^{\text{wt}u-1+\frac{s}{k}}}{z}\\
 & =4\sum_{a=1}^{k}\sum_{b=1}^{k}\left(-1\right)^{a+b-1}\prod_{i=1}^{b-1}\tau_{i,n-1}\xi^{\left(b-a+1\right)s}\text{Res}_{z}\left(Y\left(u^{k-a+2},z\right)x_{v_{1},\cdots,v_{n-1}}^{1-b+k,2-b+k,\cdots,n-1-b+k}\right)\frac{\left(1+z\right)^{\text{wt}u-1+\frac{s}{k}}}{z}\\
 & =4\sum_{b=1}^{k}\prod_{i=1}^{b-1}\tau_{i,n-1}\sum_{2-a\notin\left\{ 1-b,2-b,\cdots,n-1-b\right\} }\left(-1\right)^{a+b-1}\xi^{\left(b-a+1\right)s}\epsilon_{a}x_{u,v_{1},\cdots,v_{n-1}}^{2-a+k,1-b+k,2-b+k,\cdots,n-1-b+k}\\
 & +4\sum_{b=1}^{k}\prod_{i=1}^{b}\tau_{i,n-1}\sum_{j=1}^{n-1}\left(-1\right)^{j-1}\xi^{\left(j-1\right)s}\epsilon_{j}x_{v_{1},\cdots,v_{j-1},v_{j}',v_{j+1},\cdots,v_{n-1}}^{1-b+k,\cdots,j-1-b+k,j-b+k,j+1-b+k,\cdots,n-1-b+k},
\end{alignat*}
where $v_{j}'=\text{Res}_{z}Y\left(u,z\right)v_{j}\frac{\left(1+z\right)^{\text{wt}u-1+\frac{s}{k}}}{z}\in V_{\bar{0}},$
and $\epsilon_{a},\epsilon_{j}=\pm1$ are determined by (\ref{tensor product-super sign}).
For $1\le t\le k-n+1$ and $1\le s\le2(k-n+1)-1$ with $s$ odd, set
\begin{gather*}
y_{t}=\left(-1\right)^{n+t}\epsilon_{t}\sum_{b=1}^{k}g^{b-1}x_{u,v_{1},\cdots,v_{n-1}}^{n-1+t,1,\cdots,n-1},\\
u_{s}=-\sum_{b=1}^{k}\prod_{i=1}^{b}\tau_{i,n-1}\sum_{j=1}^{n-1}\left(-1\right)^{j-1}\xi^{\left(j-1\right)s}\epsilon_{j}x_{v_{1},\cdots,v_{j-1},v_{j}',v_{j+1},\cdots,v_{n-1}}^{1-b+k,\cdots,j-1-b+k,j-b+k,j+1-b+k,\cdots,n-1-b+k}.
\end{gather*}
Then $y_{t}\in\left(V^{\otimes k}\right)_{\bar{0}}^{0\ast}$ and $u_{s}\in\left(V^{\otimes k}\right)_{\bar{0}}$
and now we obtain
\begin{alignat*}{1}
 & \sum_{b=1}^{k}\sum_{2-a\notin\left\{ 1-b,2-b,\cdots,n-1-b\right\} }\left(-1\right)^{a+b-1}\prod_{i=1}^{b}\tau_{i,n-1}\xi^{\left(b-a+1\right)s}\epsilon_{a}x_{u,v_{1},\cdots,v_{n-1}}^{2-a+k,1-b+k,2-b+k,\cdots,n-1-b+k}\\
= & \sum_{j=1}^{k-n+1}\xi^{(n+j-2)s}y_{j}\equiv u_{s},
\end{alignat*}
where $1\le s\le2(k-n+1)-1$ with $s$ odd. It is clear that the linear
system has a solution such that each $n$-tensor vector $y_{t}$ is
a linear combination of $\left(n-1\right)$-tensor vector $u_{s}$
for all $t$. In particular, when $n=2$,
\begin{alignat*}{1}
 & y_{t}=\left(-1\right)^{t+1}\sum_{b=1}^{k}g^{b-1}x_{u,v_{1}}^{1+t,1},t=1,\cdots,k-1;u,v_{1}\in V_{\bar{1}};\\
 & u_{s}=\sum_{b=1}^{k}x_{v'_{1}}^{1-b+k}=\sum_{a=1}^{k}x_{v'_{1}}^{a},s=1,3,\cdots2k-3;v'_{1}\in V_{\bar{0}}.
\end{alignat*}
Combining (1) and (2), by induction on $n$ we see that any vector
in $\left(V^{\otimes k}\right)_{\bar{0}}^{0\ast}$ is spanned by vectors
of the form $[\sum_{a=1}^{k}v^{a}]$ for $v\in V_{\bar{0}}$.
\end{proof}
\begin{remark} \label{remark odd} For $n=2$ and $j=1,\cdots,k-1$,
we will need an explicit expression of $y_{j}$ in terms of $u_{s}$
for later purpose. Let $n=2$ in the proof of Lemma \ref{n-tensor to n-1-tensor-odd}
(1), we obtain $\sum_{j=1}^{k-1}\eta^{jt}y_{j}=u_{t},1\le t\le k-1$.
Applying \cite[Lemma 3.2]{DXY}, we obtain that $y_{j}=\sum_{s=1}^{k-1}\frac{1}{k}\left(\eta^{-js}-1\right)u_{s}$
with $u_{s}$ given by (\ref{odd u_t expresssion}).

\end{remark}

Now we give the first main theorem of this paper.

\begin{theorem}\label{mainthm-odd} Let $g=\left(1\ 2\cdots k\right)$
with $k$ odd. Define
\begin{alignat*}{1}
\phi: & \ A_{g}\left(V^{\otimes k}\right)\to A\left(V\right)\\
 & \left[\sum_{a=1}^{k}u^{a}\right]\mapsto\left[k\Delta_{k}\left(1\right)u\right],u\in V_{\bar{0}}.
\end{alignat*}
Then $\phi$ gives an isomorphism between $A_{g}\left(V^{\otimes k}\right)$
and $A\left(V\right).$

\end{theorem}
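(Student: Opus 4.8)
The plan is to establish that $\phi$ is a well-defined linear map, that it is surjective, and that it is an algebra homomorphism, and then conclude it is an isomorphism by a dimension/comparison argument (or by directly constructing an inverse). By Lemma \ref{n-tensor to n-1-tensor-odd} we already know $A_g(V^{\otimes k})$ is spanned by the classes $[\sum_{a=1}^k v^a]$ with $v\in V_{\bar 0}$, so $\phi$ is automatically surjective provided it is well-defined and $\Delta_k(1)$ is invertible on $V$ (which it is, since it is an exponential of raising operators times $k^{-L(0)}$). The first genuine task is \emph{well-definedness}: I must show that if $\sum_{a=1}^k v^a \in O_g(V^{\otimes k})$ then $\Delta_k(1)v \in O(V)$, and more fundamentally that the assignment does not depend on the choice of representative $v$ for the span element $[\sum_a v^a]$. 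Since $\Delta_k(1)$ is injective, the map $v\mapsto\sum_a v^a$ on $V_{\bar 0}$ composed with projection has kernel exactly $V_{\bar 0}\cap O_g(V^{\otimes k})$-preimages, so the content is: the span of $\{\sum_a v^a : v\in V_{\bar 0}\}$ meets $O_g(V^{\otimes k})$ precisely in $\{\sum_a v^a : \Delta_k(1)v\in O(V)\}$. I would prove both inclusions by a direct $\circ_g$-versus-$\circ$ computation using Proposition (the $\Delta_k(z)$-conjugation formula).

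The heart of the argument is the computation showing $\phi$ respects the products, i.e.
\[
\phi\Bigl(\bigl[\textstyle\sum_{a=1}^k u^a\bigr]\ast_g\bigl[\textstyle\sum_{b=1}^k v^b\bigr]\Bigr)=\bigl[k\Delta_k(1)u\bigr]\ast\bigl[k\Delta_k(1)v\bigr].
\]
To do this I would first compute $\bigl(\sum_a u^a\bigr)\ast_g\bigl(\sum_b v^b\bigr)$ in $V^{\otimes k}$. Writing $U=\sum_a u^a$, note $U\in(V^{\otimes k})^{0\ast}$ when $u\in V_{\bar 0}$, so $\ast_g$ is $\mathrm{Res}_z Y(U,z)\frac{(1+z)^{\mathrm{wt}\,U}}{z}(\cdot)$. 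Because the tensor-factor legs act independently and $Y(u^a,z)$ acts as the identity on legs $\neq a$, the cross terms $u^a$ acting on $v^b$ with $a\neq b$ produce $2$-tensor vectors, while the diagonal terms $a=b$ produce $1$-tensor vectors of the form $\sum_a w^a$ where $w$ involves $\mathrm{Res}_z Y(u,z)v\,(1+z)^{\mathrm{wt}\,u}/z$-type expressions but with the \emph{wrong} exponent shift ($\mathrm{wt}$ of $u^a$ in $V^{\otimes k}$ equals $\mathrm{wt}\,u$ in $V$, but $A_g$ uses a $z^{1/k}$-type grading). The key reduction is then to apply Remark \ref{remark odd} (together with the $n=2$ case of Lemma \ref{n-tensor to n-1-tensor-odd}) to rewrite the $2$-tensor cross terms modulo $O_g(V^{\otimes k})$ as $1$-tensor vectors, and collect everything into the form $[\sum_a w^a]$. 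Finally I apply $\phi$, i.e. hit $w$ with $k\Delta_k(1)$, and use the $\Delta_k$-conjugation formula from the Proposition at $z=1$ to recognize the result as exactly the Zhu product $[k\Delta_k(1)u]\ast[k\Delta_k(1)v]$ in $A(V)$; the substitution $(1+z_0)^{1/k}-1$ appearing there is precisely what converts the $V^{\otimes k}$-side exponent into the $A(V)$-side exponent $(1+z)^{\mathrm{wt}}$. One also checks $\phi([\sum_a {\bf 1}^a])=\phi([{\bf 1}^{\otimes k}]\cdot\text{something})$ — more precisely $\frac1k\sum_a{\bf 1}^a={\bf 1}^{\otimes k}$ is not quite the identity normalization, so I must be careful: $\phi$ sends $[{\bf 1}^{\otimes k}]=\frac1k[\sum_a{\bf 1}^a]$ to $[\Delta_k(1){\bf 1}]=[{\bf 1}]$, confirming $\phi$ preserves identities.

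For injectivity (hence bijectivity), the cleanest route is to exhibit the inverse: define $\psi:A(V)\to A_g(V^{\otimes k})$ on $V_{\bar 0}$ by $\psi([u])=[\sum_a(\Delta_k(1)^{-1}u/k)^a\cdot k]$, i.e. $\psi([u])=[\sum_a w^a]$ where $\Delta_k(1)w=u/k$ — wait, normalizing so that $\psi\circ\phi=\mathrm{id}$ forces $\psi([k\Delta_k(1)u])=[\sum_a u^a]$, so $\psi([x])=[\sum_a(\tfrac1k\Delta_k(1)^{-1}x)^a]$. One must check $\psi$ is well-defined (the odd part of $V$ contributes nothing by the computation in Lemma \ref{n-tensor to n-1-tensor-odd}(2), and $O(V)$ maps into $O_g(V^{\otimes k})$ by the reverse $\Delta_k$-computation) and that $\psi\circ\phi$ and $\phi\circ\psi$ are identities on the respective spanning sets; the latter is immediate from the definitions once well-definedness is in hand.

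I expect the main obstacle to be the product computation — specifically, \textbf{controlling the cross terms} $u^a\cdot v^b$ with $a\neq b$ and proving that after the Remark \ref{remark odd} reduction they reassemble (together with the diagonal terms and the $O_g$-corrections from the $u_s$'s in that remark) into exactly $[\sum_a w^a]$ with $w$ the single vector whose image under $k\Delta_k(1)$ is the Zhu product. Keeping track of the various $\eta$-power coefficients, the super-signs $\tau_{i,n}$ and $\epsilon_j$, and the binomial exponent bookkeeping (the shift from $\mathrm{wt}$ in $V$ to the fractional grading, absorbed by the $\Delta_k$ substitution $(1+z)^{1/k}-1\leftrightarrow z$) is where all the real work lies; the well-definedness and the homomorphism-of-identities checks are comparatively routine given the Proposition.
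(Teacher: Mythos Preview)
Your proposal is correct and follows essentially the same approach as the paper: use Lemma \ref{n-tensor to n-1-tensor-odd} for the spanning set, check well-definedness by computing $\overline{u}\circ_g\overline{v}$ (diagonal terms plus $2$-tensor cross terms reduced via Remark \ref{remark odd}), verify the homomorphism property by the analogous $\ast_g$ computation, and exhibit the inverse $\psi([x])=\bigl[\tfrac{1}{k}\sum_a(\Delta_k(1)^{-1}x)^a\bigr]$. The paper's own proof is terser---it writes out only the $\circ_g$ decomposition for $u,v\in V_{\bar 0}$ and then defers the remaining residue/$\Delta_k$-identity manipulations (including the $\ast_g$ side and the recognition step via the conjugation formula) to \cite[Theorem 3.7]{DXY}, whereas you sketch that computation explicitly; but the structure, the use of Remark \ref{remark odd} for the cross terms, and the inverse map are the same.
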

\begin{proof}
Let $h=g\sigma$ as before. Recall that $A_{g}\left(V^{\otimes k}\right)=V^{\otimes k}/O_{g}\left(V^{\otimes k}\right)$
and $A\left(V\right)=V/O\left(V\right)$ where $O_{g}\left(V^{\otimes k}\right)$
and $O\left(V\right)$ are defined in Section \ref{sec:Basics}. To
show that $\phi$ is an isomorphism between the associative algebras
$A_{g}\left(V^{\otimes k}\right)$ and $A\left(V\right)$, we first
need to show that $\phi$ is well-defined.

Since $A_{g}\left(V^{\otimes k}\right)=\left(V^{\otimes k}\right){}^{0\ast}/O_{g}\left(V^{\otimes k}\right)\cap\left(V^{\otimes k}\right){}^{0\ast}$
and $O_{g}\left(V^{\otimes k}\right)\cap\left(V^{\otimes k}\right){}^{0\ast}$
is spanned by $u\circ_{g}v$ for $u\in\left(V^{\otimes k}\right){}^{s\ast}$
and $v\in\left(V^{\otimes k}\right){}^{\left(2k-s\right)\ast}$, we
map $u\circ_{g}v$ to $0$ for $s=1,...,2k-1.$ In fact, from the
proof of Lemma \ref{n-tensor to n-1-tensor-odd} we see that $\left[u\circ_{g}v\right]=0$
just gives an identification between $p$-tensor vectors and $q$-tensor
vectors in $A_{g}\left(V^{\otimes k}\right)$ such that either $p>1$
or $q>1.$ So the main task is to show that for any $u,v\in V^{\otimes k}$,
$\phi\left(u\circ_{g}v\right)$$\in O\left(V\right).$

Let $\overline{u}=\sum_{a=1}^{k}u^{a}$, $\overline{v}=\sum_{b=1}^{k}v^{b}\in\left(V^{\otimes k}\right)^{0\ast}$
where $u,v\in V_{\bar{0}}$. Then we have
\[
\overline{u}\circ_{g}\overline{v}=\text{Res}_{z}\sum_{a,b=1}^{k}Y\left(u^{a},z\right)v^{b}\frac{\left(1+z\right)^{\text{wt}u}}{z^{2}}=\sum_{j=1}^{k}\left(u\circ v\right)^{j}+\text{wt}u\sum_{j=1}^{k-1}x_{j}+\sum_{j=1}^{k-1}y_{j},
\]
where
\[
x_{j}=\sum_{m=0}^{k-1}g^{m}x_{u,v}^{1,1+j}=\sum_{m=0}^{k-1}x_{u,v}^{1-m+k,1+j-m+k}\equiv\sum_{t=1}^{k-1}\frac{1}{k}\left(\eta^{-jt}-1\right)u_{t},
\]
\[
y_{j}=\sum_{m=0}^{k-1}g^{m}x_{u_{-2}\textbf{1},v}^{1,j}=\sum_{m=0}^{k-1}x_{u_{-2}\textbf{1},v}^{1-m+k,1+j-m+k}\equiv\sum_{t=1}^{k-1}\frac{1}{k}\left(\eta^{-jt}-1\right)w_{t}
\]
with
\[
u_{t}=-\text{Res}_{z}\sum_{j=1}^{k}\left(Y\left(u,z\right)v\frac{\left(1+z\right)^{\text{wt}u-1+\frac{t}{k}}}{z}\right)^{j},
\]
and
\[
w_{t}=-\text{Res}_{z}\sum_{j=1}^{k}\left(Y\left(u_{-2}\textbf{1},z\right)v\frac{\left(1+z\right)^{\text{wt}u+\frac{t}{k}}}{z}\right)^{j},
\]
where we use Remark \ref{remark odd}. By the same arguments given
in \cite[Theorem 3.7]{DXY}, one can prove that $\phi$ is an isomorphism
between $A_{g}\left(V^{\otimes k}\right)$ and $A\left(V\right).$
\end{proof}

\subsection{The isomorphism between $A_{g}\left(V^{\otimes k}\right)$ and $A_{\sigma}\left(V\right)$
with $k$ even}

In this subsection, we construct an isomorphism between $A_{g}\left(V^{\otimes k}\right)$
and $A_{\sigma}\left(V\right)$ as associative algebras where $k$
is an even number. Similar to the last subsection, we will first find
the generating set of $A_{g}\left(V^{\otimes k}\right)$ and then
construct an isomorphism from $A_{g}\left(V^{\otimes k}\right)$ to
$A_{\sigma}\left(V\right).$

\begin{lemma} \label{n-tensor to n-1-tensor-1-even} Let $g=\left(1\ 2\cdots k\right)$
with $k$ even. Then $A_{g}\left(V^{\otimes k}\right)$ is spanned
by $[\sum_{a=1}^{k}v^{a}]\text{ for \ensuremath{v\in V_{\bar{0}}}}$
and $[\sum_{a=1}^{k}\left(-1\right)^{a-1}v^{a}]$ for $v\in V_{\bar{1}}.$

\end{lemma}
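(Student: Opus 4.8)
The plan is to run essentially the same reduction as in Lemma \ref{n-tensor to n-1-tensor-odd}, i.e.\ to show by induction on $n$ that every $n$-tensor vector lying in an appropriate $h$-eigenspace can be rewritten, modulo $O_g(V^{\otimes k})$, as a linear combination of $1$-tensor vectors of the two allowed shapes. The essential difference from the odd case is bookkeeping about $\sigma$: when $k$ is even, $h=g\sigma$ has order $k$ (not $2k$), since $g^k=1$ and $\sigma=(g\sigma)^k$ would force order dividing $k$; more precisely $\sigma = g^{k/2}\cdot(g\sigma)^{k/2}$ type identities kick in, so one must track how $\sigma$ interacts with powers of $g$. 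I would begin exactly as before: by Theorem \ref{A_g(V) main results}, $A_g(V^{\otimes k})$ is spanned by the classes of $\sum_{a}h^{a}x^{a_1,\dots,a_n}_{v_1,\dots,v_n}$ for $n=1,\dots,k$. Splitting into the even and odd parity pieces of $x^{a_1,\dots,a_n}_{v_1,\dots,v_n}$ and using $\sigma(x^{\cdots}_{\cdots})=(-1)^{i}x^{\cdots}_{\cdots}$ on $(V^{\otimes k})_{\bar i}$, the parity-even part collapses to a multiple of $\sum_{a=1}^{k}g^{a}x^{\cdots}_{\cdots}$ and the parity-odd part collapses to a multiple of $\sum_{a=1}^{k}(-1)^{a-1}g^{a}x^{\cdots}_{\cdots}$ — note this is where the sign $(-1)^{a-1}$ that appears in the statement comes from, and it is exactly the place where the even/odd dichotomy for $k$ produces a different outcome than in Lemma \ref{n-tensor to n-1-tensor-odd}, where the odd-parity part vanished entirely.

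**Carrying out the induction.** For the inductive step I would fix $n\ge 2$ and a placement, say $(a_1,\dots,a_n)=(1,\dots,n)$, and use the relations $[u^1\circ_g x^{1,\dots,n-1}_{v_1,\dots,v_{n-1}}]=0$ after projecting $u^1$ and $x^{1,\dots,n-1}_{v_1,\dots,v_{n-1}}$ into paired $h$-eigenspaces $(V^{\otimes k})^{s\ast}$ and $(V^{\otimes k})^{(k-s)\ast}$. As in the odd case, expanding $u\circ_g v$ via $\mathrm{Res}_z\,Y(u,z)v\,(1+z)^{\mathrm{wt}\,u-1+s/k}/z$ and separating the terms according to whether the extra slot $2-a$ collides with one of $\{1-b,\dots,n-1-b\}$ produces, on one side, a sum $\sum_{j}\xi^{(n+j-2)s}y_j$ of genuine $n$-tensor vectors $y_j$ (the non-colliding terms, which after summing over $b$ become $g$-invariant $n$-tensor vectors), and on the other side a linear combination $u_s$ of $(n-1)$-tensor vectors (the colliding terms, where $v_j$ gets replaced by $v_j'=\mathrm{Res}_z Y(u,z)v_j(1+z)^{\mathrm{wt}\,u-1+s/k}/z$). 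Because $k$ is even, the admissible values of $s$ in the two parity cases are complementary residues mod $2$, and in each case the Vandermonde-type system $\sum_{j=1}^{k-n+1}\xi^{(n+j-2)s}y_j\equiv u_s$ (ranging over the allowed $s$) is nondegenerate, so each $y_j$ is solved as a combination of the $u_s$. This reduces $n$-tensor vectors to $(n-1)$-tensor vectors, and induction terminates at $n=1$, where the two surviving shapes $\sum_a v^a$ ($v\in V_{\bar 0}$) and $\sum_a (-1)^{a-1}v^a$ ($v\in V_{\bar 1}$) are precisely the two families in the statement.

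**Main obstacle.** The genuinely new point compared to the odd case is verifying that in the parity-odd branch the vector $\sum_{a=1}^{2k}\xi^{-(a-1)s}h^{a-1}u^1$ does \emph{not} vanish — instead it equals (up to a constant) $\sum_{a=1}^{k}(-1)^{a-1}\xi^{-(a-1)s}g^{a-1}u^1$ for the appropriate parity of $s$ — and that the correct sign $(-1)^{a-1}$ propagates through the Jacobi/residue computation so that the $1$-tensor vectors one lands on are exactly $\sum_a(-1)^{a-1}v'^a$ with $v'\in V_{\bar 0}$ (observe: $u$ odd times a replaced entry again odd makes $v_j'$ even, matching the shape in the statement). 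The other care point is checking nondegeneracy of the relevant linear systems when $k$ is even: the relevant characters are $\xi^{ms}=e^{\pi i m s/k}$ restricted to $s$ of fixed parity, which after the substitution $s=2t$ or $s=2t-1$ become $k$-th roots of unity and the system is an honest Vandermonde system of size $k-n+1\le k-1$, hence invertible. Apart from these sign checks everything is the same routine manipulation as in the proof of Lemma \ref{n-tensor to n-1-tensor-odd}, so I would organize the write-up as two parallel cases (1) $u^1,x^{\cdots}\in(V^{\otimes k})_{\bar 0}$ and (2) $u^1,x^{\cdots}\in(V^{\otimes k})_{\bar 1}$, mirroring that proof and recording in each the explicit $n=2$ formulas for $y_t$ and $u_t$ for use in the subsequent theorem.
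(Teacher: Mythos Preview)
Your overall strategy is right and does mirror the paper's argument, but the case split you propose is incomplete. If both $u^{1}$ and $x^{1,\dots,n-1}_{v_{1},\dots,v_{n-1}}$ lie in $(V^{\otimes k})_{\bar 0}$, then the $n$-tensor $x^{n-1+t,1,\dots,n-1}_{u,v_{1},\dots,v_{n-1}}$ appearing in your $y_{t}$ is even; and if both lie in $(V^{\otimes k})_{\bar 1}$, that $n$-tensor is again even (odd parity plus odd parity is even). So your cases (1) and (2) together only reduce \emph{even} $n$-tensors to $(n-1)$-tensors. The \emph{odd} $n$-tensors---which you correctly identified in your first paragraph as surviving in the shape $\sum_{a}(-1)^{a-1}g^{a}x^{\cdots}_{\cdots}$---are never produced as $y_{t}$'s in either of your cases, so your induction never shows that they reduce to the span of $\sum_{a}(-1)^{a-1}v^{a}$.

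The paper's proof handles this with four subcases rather than two. After separating the target $n$-tensor by total parity, each parity sector needs \emph{two} parity combinations for the pair $(u^{1},\,x^{1,\dots,n-1})$: in the even sector the pairs are (even, even) and (odd, odd), which are your (1) and (2); in the odd sector the pairs are (even, odd) and (odd, even), and these are precisely what you omit. Those mixed-parity cases are what reduce odd $n$-tensors to odd $(n-1)$-tensors, terminating at $\sum_{a}(-1)^{a-1}v^{a}$ with $v\in V_{\bar 1}$. Your parenthetical ``$u$ odd times a replaced entry again odd makes $v_{j}'$ even, matching the shape in the statement'' is a symptom of this confusion: in your case (2) one indeed gets $v_{j}'\in V_{\bar 0}$, but the resulting $1$-tensor at $n=2$ is $\sum_{a}(v_{j}')^{a}$, the even shape, not $\sum_{a}(-1)^{a-1}(v_{j}')^{a}$. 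A smaller slip: for $k$ even you have $o(h)=k$, so the projectors are sums $\sum_{a=1}^{k}$ over the $\eta=e^{2\pi i/k}$ characters, not $\sum_{a=1}^{2k}$ with $\xi=e^{\pi i/k}$; there is no parity restriction on $s$ in the even-$k$ argument.
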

\begin{proof}
Denote $h=g\sigma.$ Then $o\left(g\sigma\right)=k$. By Theorem \ref{A_g(V) main results},
$A_{g}\left(V^{\otimes k}\right)$ is spanned by vectors of form $\sum_{a=1}^{k}h^{a}x_{v_{1},\cdots,v_{n}}^{a_{1},\cdots,a_{n}}$
modulo $O_{g}\left(V^{\otimes k}\right)$ for $v_{j}\in V$, $j=1,\cdots,n$.
Note that $\sigma\left(x_{v_{1},\cdots,v_{n}}^{a_{1},\cdots,a_{n}}\right)=(-1)^{i}x_{v_{1},\cdots,v_{n}}^{a_{1},\cdots,a_{n}}$
for $x_{v_{1},\cdots,v_{n}}^{a_{1},\cdots,a_{n}}\in\left(V^{\otimes k}\right)_{\bar{i}}$
with $i=0,1.$ It is easy to check that
\[
\sum_{a=1}^{k}h^{a}x_{v_{1},\cdots,v_{n}}^{a_{1},\cdots,a_{n}}=\begin{cases}
\sum_{a=1}^{k}g^{a}x_{v_{1},\cdots,v_{n}}^{a_{1},\cdots,a_{n}} & \text{if\ }x_{v_{1},\cdots,v_{n}}^{a_{1},\cdots,a_{n}}\in\left(V^{\otimes k}\right)_{\bar{0}}\\
\sum_{a=1}^{k}g^{a}(-1)^{a}x_{v_{1},\cdots,v_{n}}^{a_{1},\cdots,a_{n}} & \text{if}\ x_{v_{1},\cdots,v_{n}}^{a_{1},\cdots,a_{n}}\in\left(V^{\otimes k}\right)_{\bar{1}}.
\end{cases}
\]
Let $n=2,\cdots,k$. We will first prove that any vector of the form
$\sum_{a=1}^{k}g^{a}\left(x_{v_{1},\cdots,v_{n}}^{a_{1},\cdots,a_{n}}\right)$
with $x_{v_{1},\cdots,v_{n}}^{a_{1},\cdots,a_{n}}\in\left(V^{\otimes k}\right)_{\bar{0}}$
and $\sum_{a=1}^{k}g^{a}(-1)^{a}x_{v_{1},\cdots,v_{n}}^{a_{1},\cdots,a_{n}}$
with $x_{v_{1},\cdots,v_{n}}^{a_{1},\cdots,a_{n}}\in\left(V^{\otimes k}\right)_{\bar{1}}$
can be reduced to a $\left(n-1\right)$-tensor vector modulo $O_{g}\left(V^{\otimes k}\right)$
and then by induction we will prove that $A_{g}\left(V^{\otimes k}\right)$
is spanned by vectors of the form $\sum_{a=1}^{k}v^{a}$ for $v\in V_{\bar{0}}$
and $\left[\sum_{a=1}^{k}\left(-1\right)^{a-1}v^{a}\right]$ for $v\in V_{\bar{1}}.$
We only give a proof for $(a_{1},\cdots,a_{n})=(1,\cdots,n)$ and
the proof for general case is similar. Note that the eigenspace of
$h$ with eigenvalue $\eta^{s}$ is given by $\left(V^{\otimes k}\right)^{s\ast}=\left\{ \sum_{a=1}^{k}\eta^{-\left(a-1\right)s}h^{a}x_{v_{1},\cdots,v_{k}}^{1,\cdots,k}\mid v_{1},\cdots,v_{k}\in V\right\} $,
where $\eta=e^{\frac{2\pi i}{k}}$ and $0\le s\le k$. We will use
the fact that for $\sum_{a=1}^{k}\eta^{-\left(a-1\right)s}h^{a-1}u^{1}\in\left(V^{\otimes k}\right)^{s\ast}$
and $\sum_{b=1}^{k}\eta^{\left(b-1\right)s}h^{b-1}x_{v_{1},\cdots,v_{n-1}}^{1,\cdots,n-1}\in\left(V^{\otimes k}\right)^{\left(k-s\right)\ast}$
with $s=1,\cdots,k-1,$
\[
0\equiv\left(\sum_{a=1}^{k}\eta^{-\left(a-1\right)s}h^{a-1}u^{1}\right)\circ_{g}\left(\sum_{b=1}^{k}\eta^{\left(b-1\right)s}h^{b-1}x_{v_{1},\cdots,v_{n-1}}^{1,\cdots,n-1}\right).
\]

(1) First we prove that any $n$-tensor vector in $\left(V^{\otimes k}\right)_{\bar{0}}^{0\ast}$
can be reduced to vectors of the form $\left[\sum_{a=1}^{k}v^{a}\right]$
for $v\in V_{\bar{0}}$.

(i) Let $u^{1}\in V_{\bar{0}}$ and $x_{v_{1},\cdots,v_{n-1}}^{1,\cdots,n-1}\in\left(V^{\otimes k}\right)_{\bar{0}}.$
Then $\sigma\left(u^{1}\right)=u^{1}$ and $\sigma\left(x_{v_{1},\cdots,v_{n-1}}^{1,\cdots,n-1}\right)=x_{v_{1},\cdots,v_{n-1}}^{1,\cdots,n-1}.$
Therefore we have
\[
\sum_{a=1}^{k}\eta^{-\left(a-1\right)s}h^{a-1}u^{1}=\sum_{a=1}^{k}\eta^{-\left(a-1\right)s}g^{a-1}u^{1}=\sum_{a=1}^{k}\eta^{-\left(a-1\right)s}u^{2-a+k}
\]
and
\begin{alignat*}{1}
\sum_{b=1}^{k}\eta^{\left(b-1\right)s}h^{b-1}x_{v_{1},\cdots,v_{n-1}}^{1,\cdots,n-1} & =\sum_{b=1}^{k}\eta^{\left(b-1\right)s}g^{b-1}x_{v_{1},\cdots,v_{n-1}}^{1,\cdots,n-1}\\
 & =\sum_{b=1}^{k}\eta^{bs}g^{b}x_{v_{1},\cdots,v_{n-1}}^{1,\cdots,n-1}\\
 & =\sum_{b=1}^{k}\eta^{bs}\prod_{i=1}^{b}\tau_{i,n-1}x_{v_{1},v_{2},\cdots,v_{n-1}}^{1-b+k,2-b+k,\cdots,n-1-b+k},
\end{alignat*}
where $m-b+k$ is understood to be $m-b$ if $m-b>0$ and $\tau_{i,n-1}$
is defined in (\ref{sign function}). Now we obtain
\begin{alignat*}{1}
0 & \equiv\left(\sum_{a=1}^{k}\eta^{-\left(a-1\right)s}h^{a-1}u^{1}\right)\circ_{g}\left(\sum_{b=1}^{k}\eta^{\left(b-1\right)s}h^{b-1}x_{v_{1},\cdots,v_{n-1}}^{1,\cdots,n-1}\right)\\
 & \equiv\left(\sum_{a=1}^{k}\eta^{-\left(a-1\right)s}u^{2-a+k}\right)\circ_{g}\left(\sum_{b=1}^{k}\eta^{bs}g^{b}x_{v_{1},\cdots,v_{n-1}}^{1,\cdots,n-1}\right)\\
 & =\text{Res}_{z}Y\left(\sum_{a=1}^{k}\eta^{-\left(a-1\right)s}u^{2-a+k},z\right)\left(\sum_{b=1}^{k}\eta^{bs}\prod_{i=1}^{b}\tau_{i,n-1}x_{v_{1},v_{2},\cdots,v_{n-1}}^{1-b+k,2-b+k,\cdots,n-1-b+k}\right)\frac{\left(1+z\right)^{\text{wt}u-1+\frac{s}{k}}}{z}\\
 & =\sum_{a=1}^{k}\sum_{b=1}^{k}\prod_{i=1}^{b}\tau_{i,n-1}\eta^{\left(b-a+1\right)s}\text{Res}_{z}Y\left(u^{2-a+k},z\right)\left(x_{v_{1},v_{2},\cdots,v_{n-1}}^{1-b+k,2-b+k,\cdots,n-1-b+k}\right)\frac{\left(1+z\right)^{\text{wt}u-1+\frac{s}{k}}}{z}\\
 & =\sum_{b=1}^{k}\sum_{2-a\notin\left\{ 1-b,2-b,\cdots,n-1-b\right\} }\prod_{i=1}^{b}\tau_{i,n-1}\eta^{\left(b-a+1\right)s}x_{u,v_{1},\cdots,v_{n-1}}^{2-a+k,1-b+k,2-b+k,\cdots,n-1-b+k}\\
 & +\sum_{b=1}^{k}\prod_{i=1}^{b}\tau_{i,n-1}\sum_{j=1}^{n-1}\eta^{\left(j-1\right)s}x_{v_{1},\cdots,v_{j-1},v_{j}',v_{j+1},\cdots,v_{n-1}}^{1-b+k,\cdots,j-1-b+k,j-b+k,j+1-b+k,\cdots,n-1-b+k},
\end{alignat*}
where $v_{j}'=\text{Res}_{z}Y\left(u,z\right)v_{j}\frac{\left(1+z\right)^{\text{wt}u-1+\frac{s}{k}}}{z}.$
For $1\le t\le k-n+1,$ $1\le s\le k-1$, set
\[
y_{t}=\sum_{b=1}^{k}g^{b-1}x_{u,v_{1},\cdots,v_{n-1}}^{n-1+t,1,\cdots,n-1}
\]
and
\[
u_{s}=-\sum_{b=1}^{k}\prod_{i=1}^{b}\tau_{i,n-1}\eta^{\left(j-1\right)s}x_{v_{1},\cdots,v_{j-1},v_{j}',v_{j+1},\cdots,v_{n-1}}^{1-b+k,\cdots,j-1-b+k,j-b+k,j+1-b+k,\cdots,n-1-b+k}.
\]
Then
\begin{align*}
 & \sum_{b=1}^{k}\sum_{2-a\notin\left\{ 1-b,2-b,\cdots,n-1-b\right\} }\prod_{i=1}^{b}\tau_{i,n-1}\eta^{\left(b-a+1\right)s}x_{u,v_{1},\cdots,v_{n-1}}^{2-a+k,1-b+k,2-b+k,\cdots,n-1-b+k}\\
= & \sum_{j=1}^{k-n+1}\eta^{(n+j-2)s}y_{j}\equiv u_{s}.
\end{align*}
It is clear that the linear system has a solution such that each $y_{t}$
is a linear combination of $u_{s}$ for all $t$. In particular, when
$n=2$,
\begin{gather}
y_{t}=\sum_{b=1}^{k}g^{b-1}x_{u,v_{1}}^{1+t,1}=\sum_{a=1}^{k}x_{u,v_{1}}^{a+t,a},t=1,\cdots,k;u,v_{1}\in V_{\bar{0}};\nonumber \\
u_{s}=-\sum_{b=1}^{k}x_{v'_{1}}^{1-b+k}=-\sum_{a=1}^{k}x_{v'_{1}}^{a},v'_{1}\in V_{\bar{0}}.\label{U_s even even}
\end{gather}

(ii) Let $u^{1}\in\left(V^{\otimes k}\right)_{\bar{1}}$ and $x_{v_{1},\cdots,v_{n-1}}^{1,\cdots,n-1}\in\left(V^{\otimes k}\right)_{\bar{1}}.$
Then $\sigma\left(u^{1}\right)=-u^{1}$ and $\sigma\left(x_{v_{1},\cdots,v_{n-1}}^{1,\cdots,n-1}\right)=-x_{v_{1},\cdots,v_{n-1}}^{1,\cdots,n-1}$.
Thus we have

\[
\sum_{a=1}^{k}\eta^{-\left(a-1\right)s}h^{a-1}u^{1}=\sum_{a=1}^{k}\eta^{-\left(a-1\right)s}\left(-1\right)^{a-1}u^{k-a+2}
\]
and
\[
\sum_{b=1}^{k}\eta^{\left(b-1\right)s}h^{b-1}x_{v_{1},\cdots,v_{n-1}}^{1,\cdots,n-1}=\sum_{b=1}^{k}\eta^{bs}\left(-1\right)^{b}\prod_{i=1}^{b}\tau_{i,n-1}x_{v_{1},\cdots,v_{n-1}}^{1-b+k,2-b+k,\cdots,n-1-b+k}.
\]
Now we get

\begin{alignat*}{1}
0\equiv & \left(\sum_{a=1}^{k}\eta^{-\left(a-1\right)s}h^{a-1}u^{1}\right)\text{\ensuremath{\circ}}_{g}\left(\sum_{b=1}^{k}\eta^{\left(b-1\right)s}h^{b-1}x_{v_{1},\cdots,v_{n-1}}^{1,\cdots,n-1}\right)\\
= & \text{Res}_{z}Y\left(\sum_{a=1}^{k}\eta^{-\left(a-1\right)s}\left(-1\right)^{a-1}u^{k-a+2},z\right)\\
 & \cdot\left(\sum_{b=1}^{k}\eta^{bs}\left(-1\right)^{b}\prod_{i=1}^{b}\tau_{i,n-1}x_{v_{1},\cdots,v_{n-1}}^{1-b+k,2-b+k,\cdots,n-1-b+k}\right)\frac{\left(1+z\right)^{\text{wt}u-1+\frac{s}{k}}}{z}\\
= & \sum_{a=1}^{k}\sum_{b=1}^{k}\left(-1\right)^{a+b-1}\prod_{i=1}^{b}\tau_{i,n-1}\eta^{\left(b-a+1\right)s}\text{Res}_{z}\left(Y\left(u^{k-a+2},z\right)x_{v_{1},\cdots,v_{n-1}}^{1-b+k,2-b+k,\cdots,n-1-b+k}\right)\frac{\left(1+z\right)^{\text{wt}u-1+\frac{s}{k}}}{z}\\
= & \sum_{b=1}^{k}\sum_{2-a\notin\left\{ 1-b,2-b,\cdots,n-1-b\right\} }\left(-1\right)^{a+b-1}\epsilon_{a}\prod_{i=1}^{b}\tau_{i,n-1}\eta^{\left(b-a+1\right)s}x_{u,v_{1},\cdots,v_{n-1}}^{2-a+k,1-b+k,2-b+k,\cdots,n-1-b+k}\\
 & +\sum_{b=1}^{k}\prod_{i=1}^{b}\tau_{i,n-1}\sum_{j=1}^{n-1}\left(-1\right)^{j-1}\eta^{\left(j-1\right)s}\epsilon_{j}x_{v_{1},\cdots,v_{j-1},v_{j}',v_{j+1},\cdots,v_{n-1}}^{1-b+k,\cdots,j-1-b+k,j-b+k,j+1-b+k,\cdots,n-1-b+k},
\end{alignat*}
where $v_{j}'=\text{Res}_{z}Y\left(u,z\right)v_{j}\frac{\left(1+z\right)^{\text{wt}u-1+\frac{s}{k}}}{z}$,
and $\epsilon_{a},\epsilon_{j}=\pm1$ are determined by (\ref{tensor product-super sign}).
For $1\le t\le k-n+1,$ $1\le s\le k-1$, set
\[
y_{t}=\left(-1\right)^{t+n-1}\epsilon_{t}\sum_{b=1}^{k}g^{b-1}x_{u,v_{1},\cdots,v_{n-1}}^{n-1+t,1,\cdots,n-1},
\]
\[
u_{s}=-\sum_{b=1}^{k}\prod_{i=1}^{b}\tau_{i,n-1}\sum_{j=1}^{n-1}\left(-1\right)^{1-j}\eta^{\left(j-1\right)s}\epsilon_{j}x_{v_{1},\cdots,v_{j-1},v_{j}',v_{j+1},\cdots,v_{n-1}}^{1-b+k,\cdots,j-1-b+k,j-b+k,j+1-b+k,\cdots,n-1-b+k}
\]
where $\epsilon_{t}$ is determined by (\ref{tensor product-super sign})$.$
Then
\begin{align*}
 & \sum_{b=1}^{k}\sum_{2-a\notin\left\{ 1-b,2-b,\cdots,n-1-b\right\} }\left(-1\right)^{a+b-1}\epsilon_{a}\prod_{i=1}^{b}\tau_{i,n-1}\eta^{\left(b-a+1\right)s}x_{u,v_{1},\cdots,v_{n-1}}^{2-a+k,1-b+k,2-b+k,\cdots,n-1-b+k}\\
= & \sum_{j=1}^{k-n+1}\eta^{(n+j-2)s}y_{j}\equiv u_{s}.
\end{align*}
Clearly, this linear system has a solution that each $n$-tensor vector
$y_{t}\in\left(V^{\otimes k}\right)_{\bar{0}}^{0\ast}$ is a linear
combination of $\left(n-1\right)$-tensor vector $u_{s}\in\left(V^{\otimes k}\right)_{\bar{0}}$
for all $t.$ In particular, when $n=2$,
\begin{gather}
y_{t}=\left(-1\right)^{t}\sum_{b=1}^{k}g^{b-1}x_{u,v_{1}}^{1+t,1}=\left(-1\right)^{t}\sum_{a=1}^{k}x_{u,v_{1}}^{a+t,a},t=1,\cdots,k-1;u,v\in V_{\bar{1}};\nonumber \\
u_{s}=-\sum_{b=1}^{k}x_{v'_{1}}^{2-b+k}=-\sum_{a=1}^{k}x_{v'_{1}}^{a},s=1,\cdots,k-1,v_{1}'\in V_{\bar{0}}.\label{U_s odd odd}
\end{gather}

Combining (i) and (ii), by induction on $n$ we obtain that any $n$-tensor
vector in $\left(V^{\otimes k}\right)_{\bar{0}}^{0\ast}$ can be reduced
to vectors of the form $\left[\sum_{a=1}^{k}v^{a}\right]$ for $v\in V_{\bar{0}}$.

(2) Now we prove that any $n$-tensor vector in $\left(V^{\otimes k}\right)_{\bar{1}}^{0\ast}$
can be reduced to vectors of the form $\left[\sum_{a=1}^{k}\left(-1\right)^{a-1}v^{a}\right]=\left[v^{1}-v^{2}+v^{3}-\cdots+v^{k-1}-v^{k}\right]$
for $v\in V_{\bar{1}}$.

(i) Let $u\in V_{\bar{0}}$ and $x_{v_{1},\cdots,v_{n-1}}^{1,\cdots,n-1}\in\left(V^{\otimes k}\right)_{\bar{1}}.$
Then $\sigma\left(u\right)=u$ and $\sigma\left(x_{v_{1},\cdots,v_{n-1}}^{1,\cdots,n-1}\right)=-x_{v_{1},\cdots,v_{n-1}}^{1,\cdots,n-1}.$
Therefore

\[
\sum_{a=1}^{k}\eta^{-\left(a-1\right)s}h^{a-1}u^{1}=\sum_{a=1}^{k}\eta^{-\left(a-1\right)s}g^{a-1}u^{1}=\sum_{a=1}^{k}\eta^{-\left(a-1\right)s}u^{2-a+k}
\]
and

\[
\sum_{b=1}^{k}\eta^{\left(b-1\right)s}h^{b-1}x_{v_{1},\cdots,v_{n-1}}^{1,\cdots,n-1}=\sum_{b=1}^{k}\eta^{bs}\left(-1\right)^{b}\prod_{i=1}^{b}\tau_{i,n-1}x_{v_{1},\cdots,v_{n-1}}^{1-b+k,2-b+k,\cdots,n-1-b+k}.
\]
Now we obtain
\begin{alignat*}{1}
0\equiv & \left(\sum_{a=1}^{k}\eta^{-\left(a-1\right)s}h^{a-1}u^{1}\right)\text{\ensuremath{\circ}}_{g}\left(\sum_{b=1}^{k}\eta^{\left(b-1\right)s}h^{b-1}x_{v_{1},\cdots,v_{n-1}}^{1,\cdots,n-1}\right)\\
= & \text{Res}_{z}Y\left(\sum_{a=1}^{k}\eta^{-\left(a-1\right)s}u^{k-a+2},z\right)\left(\sum_{b=1}^{k}\eta^{bs}\left(-1\right)^{b}\prod_{i=1}^{b}\tau_{i,n-1}x_{v_{1},\cdots,v_{n-1}}^{1-b+k,2-b+k,\cdots,n-1-b+k}\right)\frac{\left(1+z\right)^{\text{wt}u-1+\frac{s}{k}}}{z}\\
= & \sum_{a=1}^{k}\sum_{b=1}^{k}\left(-1\right)^{b}\prod_{i=1}^{b}\tau_{i,n-1}\eta^{\left(b-a+1\right)s}\text{Res}_{z}\left(Y\left(u^{k-a+2},z\right)x_{v_{1},v_{2},\cdots,v_{n-1}}^{1-b+k,2-b+k,\cdots,n-1-b+k}\right)\frac{\left(1+z\right)^{\text{wt}u-1+\frac{s}{k}}}{z}\\
= & \sum_{b=1}^{k}\sum_{2-a\notin\left\{ 1-b,2-b,\cdots,n-1-b\right\} }\left(-1\right)^{b}\prod_{i=1}^{b}\tau_{i,n-1}\eta^{\left(b-a+1\right)s}x_{u,v_{1},\cdots,v_{n-1}}^{2-a+k,1-b+k,2-b+k,\cdots,n-1-b+k}\\
 & +\sum_{b=1}^{k}\left(-1\right)^{b}\prod_{i=1}^{b}\tau_{i,n-1}\sum_{j=1}^{n-1}\eta^{\left(j-1\right)s}x_{v_{1},\cdots,v_{j-1},v_{j}',v_{j+1},\cdots,v_{n-1}}^{1-b+k,\cdots,j-1-b+k,j-b+k,j+1-b+k,\cdots,n-1-b+k},
\end{alignat*}
where $v_{j}'=\text{Res}_{z}Y\left(u,z\right)v_{j}\frac{\left(1+z\right)^{\text{wt}u-1+\frac{s}{k}}}{z}.$
For $1\le t\le k-n+1,$ $1\le s\le k-1$, set
\[
y_{t}=\sum_{b=1}^{k}\left(-1\right)^{b}g^{b-1}x_{u,v_{1},\cdots,v_{n-1}}^{n-1+t,1,\cdots,n-1}
\]
and
\[
u_{s}=-\sum_{b=1}^{k}\left(-1\right)^{b}\prod_{i=1}^{b}\tau_{i,n-1}\eta^{\left(j-1\right)s}x_{v_{1},\cdots,v_{j-1},v_{j}',v_{j+1},\cdots,v_{n-1}}^{1-b+k,\cdots,j-1-b+k,j-b+k,j+1-b+k,\cdots,n-1-b+k}.
\]
Then
\begin{alignat*}{1}
 & \sum_{b=1}^{k}\sum_{2-a\notin\left\{ 1-b,2-b,\cdots,n-1-b\right\} }\left(-1\right)^{b}\prod_{i=1}^{b}\tau_{i,n-1}\eta^{\left(b-a+1\right)s}x_{u,v_{1},\cdots,v_{n-1}}^{2-a+k,1-b+k,2-b+k,\cdots,n-1-b+k}\\
= & \sum_{j=1}^{k-n+1}\eta^{(n+j-2)s}y_{j}\equiv u_{s}.
\end{alignat*}
It is clear that the linear system has a solution such that each $n$-tensor
vector $y_{t}\in\left(V^{\otimes k}\right)_{\bar{1}}^{0\ast}$ is
a linear combination of $\left(n-1\right)$-tensor vector $u_{s}\in\left(V^{\otimes k}\right)_{\bar{1}}$
for all $t$. In particular, when $n=2$,
\begin{gather}
y_{t}=\sum_{b=1}^{k}\left(-1\right)^{b}g^{b-1}x_{u,v_{1}}^{1+t,1}=\sum_{a=1}^{k}\left(-1\right)^{a}x_{u,v_{1}}^{a+t,a},t=1,\cdots,k-1;u\in V_{\bar{0}},v_{1}\in V_{\bar{1}};\nonumber \\
u_{s}=-\sum_{b=1}^{k}\left(-1\right)^{b}x_{v'_{1}}^{2-b+k}=-\sum_{a=1}^{k}\left(-1\right)^{a}x_{v'_{1}}^{a},s=1,\cdots,k-1;v'_{1}\in V_{\bar{1}}.\label{U_s  odd even}
\end{gather}

(ii) Let $u^{1}\in\left(V^{\otimes k}\right)_{\bar{1}}$ and $x_{v_{1},\cdots,v_{n-1}}^{1,\cdots,n-1}\in\left(V^{\otimes k}\right)_{\bar{0}}.$
Then $\sigma\left(u^{1}\right)=-u^{1}$ and $\sigma\left(x_{v_{1},\cdots,v_{n-1}}^{1,\cdots,n-1}\right)=x_{v_{1},\cdots,v_{n-1}}^{1,\cdots,n-1}.$
In this case, we have

\[
\sum_{a=1}^{k}\eta^{-\left(a-1\right)s}h^{a-1}u^{1}=\sum_{a=1}^{k}\eta^{-\left(a-1\right)s}\left(-1\right)^{a-1}u^{k-a+2}
\]
and
\begin{alignat*}{1}
\sum_{b=1}^{k}\eta^{\left(b-1\right)s}h^{b-1}x_{v_{1},\cdots,v_{n-1}}^{1,\cdots,n-1} & =\sum_{b=1}^{k}\eta^{bs}\prod_{i=1}^{b}\tau_{i,n-1}x_{v_{1},v_{2},\cdots,v_{n-1}}^{1-b+k,2-b+k,\cdots,n-1-b+k},
\end{alignat*}
where $m-b+k$ is understood to be $m-b$ if $m-b>0$ and $\tau_{i,n-1}$
is defined in (\ref{sign function}). Then we obtain

\begin{alignat*}{1}
0 & \equiv\left(\sum_{a=1}^{k}\eta^{-\left(a-1\right)s}h^{a-1}u^{1}\right)\circ_{g}\left(\sum_{b=1}^{k}\eta^{\left(b-1\right)s}h^{b-1}x_{v_{1},\cdots,v_{n-1}}^{1,\cdots,n-1}\right)\\
 & =\left(\sum_{a=1}^{k}\eta^{-\left(a-1\right)s}\left(-1\right)^{a-1}u^{2-a+k}\right)\circ_{g}\left(\sum_{b=1}^{k}\eta^{bs}\prod_{i=1}^{b}\tau_{i,n-1}x_{v_{1},v_{2},\cdots,v_{n-1}}^{1-b+k,2-b+k,\cdots,n-1-b+k}\right)\\
 & =\text{Res}_{z}Y\left(\sum_{a=1}^{k}\eta^{-\left(a-1\right)s}\left(-1\right)^{a-1}u^{2-a+k},z\right)\\
 & \cdot\left(\sum_{b=1}^{k}\eta^{bs}\prod_{i=1}^{b}\tau_{i,n-1}x_{v_{1},v_{2},\cdots,v_{n-1}}^{1-b+k,2-b+k,\cdots,n-1-b+k}\right)\frac{\left(1+z\right)^{\text{wt}u-1+\frac{s}{k}}}{z}\\
 & =\sum_{a=1}^{k}\sum_{b=1}^{k}\left(-1\right)^{a-1}\prod_{i=1}^{b}\tau_{i,n-1}\eta^{\left(b-a+1\right)s}\text{Res}_{z}Y\left(u^{2-a+k},z\right)\left(x_{v_{1},v_{2},\cdots,v_{n-1}}^{1-b+k,2-b+k,\cdots,n-1-b+k}\right)\frac{\left(1+z\right)^{\text{wt}u-1+\frac{s}{k}}}{z}\\
 & =\sum_{b=1}^{k}\prod_{i=1}^{b}\tau_{i,n-1}\sum_{2-a\notin\left\{ 1-b,2-b,\cdots,n-1-b\right\} }\left(-1\right)^{a-1}\eta^{\left(b-a+1\right)s}\epsilon_{a}x_{u,v_{1},v_{2},\cdots,v_{n-1}}^{2-a+k,1-b+k,2-b+k,\cdots,n-1-b+k}\\
 & +\sum_{b=1}^{k}\prod_{i=1}^{b}\tau_{i,n-1}\sum_{j=1}^{n-1}\left(-1\right)^{b-j}\eta^{\left(j-1\right)s}\epsilon_{j}x_{v_{1},\cdots,v_{j-1},v_{j}',v_{j+1},\cdots,v_{n-1}}^{1-b+k,\cdots,j-1-b+k,j-b+k,j+1-b+k,\cdots,n-1-b+k},
\end{alignat*}
where $v_{j}'=\text{Res}_{z}Y\left(u,z\right)v_{j}\frac{\left(1+z\right)^{\text{wt}u-1+\frac{s}{k}}}{z}$,
and $\epsilon_{a},\epsilon_{j}=\pm1$ are determined by (\ref{tensor product-super sign}).
For $1\le t\le k-n+1,$ $1\le s\le k-1$, set
\begin{align*}
y_{t} & =\epsilon_{t}\sum_{b=1}^{k}\left(-1\right)^{1-n-t+b}g^{b-1}x_{u,v_{1},\cdots,v_{n-1}}^{n-1+t,1,\cdots,n-1}
\end{align*}
and
\[
u_{s}=-\sum_{b=1}^{k}\prod_{i=1}^{b}\tau_{i,n-1}\sum_{j=1}^{n-1}\left(-1\right)^{b-j}\eta^{\left(j-1\right)s}\epsilon_{j}x_{v_{1},\cdots,v_{j-1},v_{j}',v_{j+1},\cdots,v_{n-1}}^{1-b+k,\cdots,j-1-b+k,j-b+k,j+1-b+k,\cdots,n-1-b+k},
\]
where $\epsilon_{t}$ is determined by (\ref{tensor product-super sign}).
Then
\begin{alignat*}{1}
 & \sum_{b=1}^{k}\sum_{2-a\notin\left\{ 1-b,2-b,\cdots,n-1-b\right\} }\left(-1\right)^{a-1}\epsilon_{a}\prod_{i=1}^{b}\tau_{i,n-1}\eta^{\left(b-a+1\right)s}x_{u,v_{1},v_{2},\cdots,v_{n-1}}^{1-b+k,2-b+k,\cdots,n-1-b+k}\\
= & \sum_{j=1}^{k-n+1}\eta^{(n+j-2)s}y_{j}\equiv u_{s}.
\end{alignat*}
It is clear that the linear system has a solution such that each $n$-tensor
vector $y_{t}$ is a linear combination of $\left(n-1\right)$-tensor
vector $u_{s}$ for all $t$. In particular, when $n=2$,
\begin{alignat}{1}
 & y_{t}=\left(-1\right)^{t}\sum_{b=1}^{k}\left(-1\right)^{b-1}g^{b-1}x_{u,v_{1}}^{1+t,1}=\left(-1\right)^{t}\sum_{a=1}^{k}\left(-1\right)^{a-1}x_{u,v_{1}}^{a+t,a},t=1,\cdots,k-1;u\in V_{\bar{1}},v_{1}\in V_{\bar{0}};\nonumber \\
 & u_{s}=-\sum_{b=1}^{k}\left(-1\right)^{b-1}x_{v'_{1}}^{2-b+k}=-\sum_{a=1}^{k}\left(-1\right)^{a-1}x_{v'_{1}}^{a},s=1,\cdots,k-1;v'_{1}\in V_{\bar{1}}.\label{U_s even odd}
\end{alignat}

Combining (i) and (ii), by induction on $n$ we obtain that any $n$-tensor
vector in $\left(V^{\otimes k}\right)_{\bar{1}}^{0\ast}$ can be reduced
to vectors of the form $\sum_{b=1}^{k}\left(-1\right)^{b-1}v^{b}=\left[v^{1}-v^{2}+v^{3}-\cdots+v^{k-1}-v^{k}\right]$
for $v\in V_{\bar{1}}.$

Therefore, $A_{g}\left(V^{\otimes k}\right)$ is spanned by $[\sum_{a=1}^{k}v^{a}]\text{ for \ensuremath{v\in V_{\bar{0}}}}$
and $[\sum_{a=1}^{k}\left(-1\right)^{a-1}v^{a}]$ for $v\in V_{\bar{1}}.$
\end{proof}
\begin{remark} \label{remark} For $n=2$ and $j=1,\cdots,k-1$,
we will need an explicit expression of $y_{j}$ in terms of $u_{s}$
for later purpose. Let $n=2$ in the proof of Lemma \ref{n-tensor to n-1-tensor-1-even},
we obtain $\sum_{j=1}^{k-1}\eta^{js}y_{j}=u_{s}$. Applying \cite[Lemma 3.2]{DXY},
we obtain that $y_{j}=\sum_{s=1}^{k-1}\frac{1}{k}\left(\eta^{-js}-1\right)u_{s}$
with $u_{s}$ given by (\ref{U_s even even}), (\ref{U_s odd odd}),
(\ref{U_s  odd even}) and (\ref{U_s even odd}).

\end{remark}

Now we prove the main theorem.

\begin{theorem}\label{mainthm-even} Let $g=\left(1\ 2\cdots k\right)$
with $k$ even. Define
\begin{eqnarray*}
\phi:A_{g}\left(V^{\otimes k}\right) & \to & A_{\sigma}\left(V\right)\\{}
[\sum_{a=1}^{k}u^{a}] & \mapsto & [k\Delta_{k}\left(1\right)u],\ u\in V_{\bar{0}},\\{}
[\sum_{a=1}^{k}\left(-1\right)^{a-1}v^{a}] & \mapsto & [k\Delta_{k}\left(1\right)v],\ v\in V_{\bar{1}}.
\end{eqnarray*}
Then $\phi$ gives an isomorphism between $A_{g}\left(V^{\otimes k}\right)$
and $A_{\sigma}\left(V\right).$\end{theorem}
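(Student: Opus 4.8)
The plan is to mirror the proof of Theorem \ref{mainthm-odd}, adapting it to the parity-twisted target $A_{\sigma}\left(V\right)$. By Lemma \ref{n-tensor to n-1-tensor-1-even} the algebra $A_{g}\left(V^{\otimes k}\right)$ is spanned by the classes $[\sum_{a=1}^{k}u^{a}]$ with $u\in V_{\bar{0}}$ together with the classes $[\sum_{a=1}^{k}(-1)^{a-1}v^{a}]$ with $v\in V_{\bar{1}}$, so the map $\phi$ is defined on a spanning set and the first task is well-definedness. I would first check that $\phi$ kills the image of $O_{g}(V^{\otimes k})$. As in the odd case, the relations $[u\circ_{g}v]=0$ for $u\in(V^{\otimes k})^{s\ast}$, $v\in(V^{\otimes k})^{(k-s)\ast}$ with $s\neq0$ only serve to identify $p$-tensor vectors with $q$-tensor vectors (via the linear systems $\sum_{j}\eta^{(n+j-2)s}y_{j}\equiv u_{s}$ appearing in the four cases of Lemma \ref{n-tensor to n-1-tensor-1-even}), and these are exactly the identifications that make the spanning set in the lemma consistent; so no new obstruction arises there. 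The substantive point is to show that for $\bar{u}=\sum_{a}u^{a}$, $\bar{v}=\sum_{b}v^{b}$ (and the analogous sign-twisted sums when $u$ or $v$ is odd), $\phi(\bar{u}\circ_{g}\bar{v})\in O_{\sigma}(V)$, and that $\phi$ respects the products $\ast_{g}$ and $\ast_{\sigma}$.

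The computation of $\bar{u}\circ_{g}\bar{v}$ proceeds as in Theorem \ref{mainthm-odd}. Writing $\bar{u}\circ_{g}\bar{v}=\mathrm{Res}_{z}\sum_{a,b}Y(u^{a},z)v^{b}\frac{(1+z)^{\mathrm{wt}\,u}}{z^{2}}$, the diagonal terms $a=b$ contribute $\sum_{j=1}^{k}(u\circ v)^{j}$ (in the super sense, so one must track the $(-1)^{|u||v|}$ signs carefully because of the super tensor product formula \eqref{tensor product-super sign}), and the off-diagonal terms $a\neq b$ produce $2$-tensor vectors $x_{u,v}^{1,1+j}$ and $x_{u_{-2}\mathbf{1},v}^{1,j}$, which by Remark \ref{remark} are congruent modulo $O_{g}(V^{\otimes k})$ to explicit linear combinations $\sum_{t}\tfrac{1}{k}(\eta^{-jt}-1)u_{t}$ of $1$-tensor vectors, with $u_{t}$ given by \eqref{U_s even even}--\eqref{U_s even odd} depending on the parities of $u$ and $v$. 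Summing these over $j$ collapses the geometric sums and leaves a combination of $1$-tensor vectors of the form $\sum_{a}(w)^{a}$ or $\sum_{a}(-1)^{a-1}(w)^{a}$ built from residues of $Y(u,z)v$ and $Y(u_{-2}\mathbf{1},z)v$ against powers $(1+z)^{\mathrm{wt}\,u-1+t/k}/z$ and $(1+z)^{\mathrm{wt}\,u+t/k}/z$. Applying $\phi$ converts each such $1$-tensor sum into $k\Delta_{k}(1)(\text{residue})$, and the intertwining property of $\Delta_{k}(z)$ from the Proposition (with $z=1$, $z_{0}$ the twisting variable) turns these residues into precisely the residues defining $O_{\sigma}(V)$, i.e. terms of the shape $\mathrm{Res}_{z}\frac{(1+z)^{\mathrm{wt}\,u'}}{z^{2}}Y(u',z)v'$; this is where the specific exponents $\mathrm{wt}\,u-1+\delta_{r}+\frac{r}{T}$ in the definition of $\circ_{\sigma}$ (with $T=1$, so exponent $\mathrm{wt}\,u$, $\delta_{0}=1$) get matched. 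The reference to \cite[Theorem 3.7]{DXY} or the parallel argument in Theorem \ref{mainthm-odd} handles the bookkeeping that $\Delta_{k}(1)$ sends $O(V)$-type relations to $O_{\sigma}(V)$-type relations.

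Once well-definedness is established, I would show $\phi$ is an algebra homomorphism by computing $\bar{u}\ast_{g}\bar{v}$ for the generators: only the $s=0$ component survives, $\bar{u}\ast_{g}\bar{v}=\mathrm{Res}_{z}\sum_{a,b}Y(u^{a},z)v^{b}\frac{(1+z)^{\mathrm{wt}\,u}}{z}$, and again the diagonal gives $\sum_{j}(u\ast v)^{j}$ up to the relations reducing $2$-tensor terms; then $\phi(\bar{u}\ast_{g}\bar{v})=[k\Delta_{k}(1)(u\ast v)]$ and one uses the $\Delta_{k}$-conjugation identity to rewrite $k\Delta_{k}(1)(u\ast v)=\mathrm{Res}_{z}Y(\Delta_{k}(1)u,\cdots)\cdots$ and match it with $[k\Delta_{k}(1)u]\ast_{\sigma}[k\Delta_{k}(1)v]$, exactly as in the odd case. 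Finally, surjectivity is immediate since $\Delta_{k}(1)=\exp(\sum a_{j}L(j))k^{-L(0)}$ is an invertible operator on $V$ (being a product of an invertible scaling and a unipotent exponential of raising operators) preserving $V_{\bar{0}}$ and $V_{\bar{1}}$, so its image spans $A_{\sigma}(V)=V/O_{\sigma}(V)$; and injectivity follows by comparing dimensions, or by constructing the inverse using $\Delta_{k}(1)^{-1}$ and Lemma \ref{n-tensor to n-1-tensor-1-even}. The main obstacle I expect is the careful sign bookkeeping: unlike the odd case, here both even and odd elements $v\in V$ contribute, the odd generators carry the alternating sign $(-1)^{a-1}$, and the super tensor product formula \eqref{tensor product-super sign} injects extra signs $\epsilon_{a},\epsilon_{j}$ when moving odd factors past one another, so one must verify that all these signs conspire correctly so that $\phi$ of an odd $2$-tensor relation lands in $O_{\sigma}(V)$ with the right sign and that $\phi$ is genuinely multiplicative on the odd part (in particular that the product of two odd generators, which lands in the even part, maps correctly under $\phi$).
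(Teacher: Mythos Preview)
Your proposal is correct and follows essentially the same approach as the paper: reduce to the $1$-tensor spanning set via Lemma~\ref{n-tensor to n-1-tensor-1-even}, check well-definedness by computing $\phi$ on $\circ_{g}$-products in the four parity cases using Remark~\ref{remark} and the $\Delta_{k}$-conjugation identity (appealing to \cite[Theorem~3.7]{DXY} for the residue bookkeeping), verify the homomorphism property on $\ast_{g}$-products in the same four cases, and invert via $\Delta_{k}(1)^{-1}$. The only minor slip is notational: the diagonal contribution is $\sum_{j}(u\circ_{\sigma}v)^{j}$ rather than $\sum_{j}(u\circ v)^{j}$, but you already indicate you mean the $\sigma$-twisted version when you match the exponent $\mathrm{wt}\,u$ with $\delta_{0}=1$.
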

\begin{proof}
Let $h=g\sigma$. Then $o\left(h\right)=k$. Recall that $A_{g}\left(V^{\otimes k}\right)=V^{\otimes k}/O_{g}\left(V^{\otimes k}\right)$
and $A_{\sigma}\left(V\right)=V/O_{\sigma}\left(V\right)$ where $O_{g}\left(V^{\otimes k}\right)$
and $O_{\sigma}\left(V\right)$ are defined in Section \ref{sec:Basics}.
To show that $\phi$ is an isomorphism between the associative algebras
$A_{g}\left(V^{\otimes k}\right)$ and $A_{\sigma}\left(V\right)$,
we need to show that $\phi$ is well-defined and it is a homomorphism.

First we show that $\phi$ is well-defined. Since $A_{g}\left(V^{\otimes k}\right)=\left(V^{\otimes k}\right){}^{0\ast}/O_{g}\left(V^{\otimes k}\right)\cap\left(V^{\otimes k}\right){}^{0\ast}$
and $O_{g}\left(V^{\otimes k}\right)\cap\left(V^{\otimes k}\right){}^{0\ast}$
is spanned by $u\circ_{g}v$ for $u\in\left(V^{\otimes k}\right){}^{s\ast}$
and $v\in\left(V^{\otimes k}\right){}^{\left(k-s\right)\ast}$, we
map $u\circ_{g}v$ to $0$ for $s=1,...,k-1.$ In fact, from the proof
of Lemma \ref{n-tensor to n-1-tensor-1-even} we see that $\left[u\circ_{g}v\right]=0$
just gives an identification between $p$-tensor vectors and $q$-tensor
vectors in $A_{g}\left(V^{\otimes k}\right)$ such that either $p>1$
or $q>1.$ It suffices to prove that for any $x,y\in\left(V^{\otimes k}\right)^{0\ast}$,
$\phi\left(x\circ_{g}y\right)$$\in O_{\sigma}\left(V\right).$

For convenience, in the following we will denote $\overline{u}=\sum_{a=1}^{k}u^{a}$
for $u\in V_{\bar{0}}$ and $\widetilde{u}=\sum_{a=1}^{k}\left(-1\right)^{a-1}u^{a}$
for $u\in V_{\bar{1}}.$

(1) Let $\overline{u}=\sum_{a=1}^{k}u^{a},\overline{v}=\sum_{b=1}^{k}v^{b}\in\left(V^{\otimes k}\right)_{\bar{0}}^{0\ast}$
with $u,v\in V_{\bar{0}}.$ Then we have
\[
\overline{u}\circ_{g}\overline{v}=\text{Res}_{z}\sum_{a,b=1}^{k}Y\left(u^{a},z\right)v^{b}\frac{\left(1+z\right)^{\text{wt}u}}{z^{2}}=\sum_{j=1}^{k}\left(u\circ_{\sigma}v\right)^{j}+\text{wt}u\sum_{j=1}^{k-1}x_{j}+\sum_{j=1}^{k-1}y_{j},
\]
where
\[
x_{j}=\sum_{m=0}^{k-1}g^{m}x_{u,v}^{1,1+j}=\sum_{m=0}^{k-1}x_{u,v}^{1-m+k,1+j-m+k}\equiv\sum_{t=1}^{k-1}\frac{1}{k}\left(\eta^{-jt}-1\right)u_{t},
\]
\[
\ y_{j}=\sum_{m=0}^{k-1}g^{m}x_{u_{-2}\textbf{1},v}^{1,j}=\sum_{m=0}^{k-1}x_{u_{-2}\textbf{1},v}^{1-m+k,1+j-m+k}\equiv\sum_{t=1}^{k-1}\frac{1}{k}\left(\eta^{-jt}-1\right)w_{t}
\]
with
\[
u_{t}=-\text{Res}_{z}\sum_{j=1}^{k}\left(Y\left(u,z\right)v\frac{\left(1+z\right)^{\text{wt}u-1+\frac{t}{k}}}{z}\right)^{j},
\]
and
\[
w_{t}=-\text{Res}_{z}\sum_{j=1}^{k}\left(Y\left(u_{-2}\textbf{1},z\right)v\frac{\left(1+z\right)^{\text{wt}u+\frac{t}{k}}}{z}\right)^{j}
\]
where we use Remark \ref{remark}. By the same arguments in \cite[Theorem 3.7]{DXY},
we obtain
\begin{align*}
 & \phi\left(\overline{u}\circ_{g}\overline{v}\right)\\
 & =k\Delta_{k}\left(1\right)u\circ_{\sigma}v-\text{wt}u\sum_{t,j=1}^{k-1}\frac{1}{k}\left(\eta^{-jt}-1\right)k\Delta_{k}\left(1\right)\text{Res}_{z}Y\left(u,z\right)v\frac{\left(1+z\right)^{\text{wt}u-1+\frac{t}{k}}}{z}\\
 & -\sum_{t,j=1}^{k-1}\frac{1}{k}\left(\eta^{-jt}-1\right)k\Delta_{k}\left(1\right)\text{Res}_{z}Y\left(u_{-2}\textbf{1},z\right)v\frac{\left(1+z\right)^{\text{wt}u+\frac{t}{k}}}{z}\\
 & =k^{1-\text{wt}u}\text{Res}_{z}Y\left(e^{\sum_{j\in\mathbb{Z}_{+}}a_{j}(1+z)^{-j}L\left(j\right)}u,z\right)\Delta_{k}\left(1\right)v\left(1+z\right)^{\text{wt}u}\frac{1}{z^{2}}\in O_{\sigma}\left(V\right).
\end{align*}
Thus we obtain $\phi\left(\overline{u}\circ_{g}\overline{v}\right)=\lambda u\circ_{\sigma}v\in O_{\sigma}\left(V\right)$
for some constant $\lambda.$

(2) Let $\overline{u}=\sum_{a=1}^{k}u^{a}\in\left(V^{\otimes k}\right)_{\bar{0}}^{0\ast}$,
$\widetilde{v}=\sum_{b=1}^{k}\left(-1\right)^{b-1}v^{b}\in\left(V^{\otimes k}\right)_{\bar{1}}^{0\ast}$
with $u\in V_{\bar{0}}$ and $v\in V_{\bar{1}}.$ Then
\[
\overline{u}\circ_{g}\widetilde{v}=\text{Res}_{z}\sum_{a,b=1}^{k}Y\left(u^{a},z\right)\left(-1\right)^{b-1}v^{b}\frac{\left(1+z\right)^{\text{wt}u}}{z^{2}}=\sum_{j=1}^{k}\left(-1\right)^{j-1}\left(u\circ_{\sigma}v\right)^{j}+\text{wt}u\sum_{j=1}^{k-1}x_{j}+\sum_{j=1}^{k-1}y_{j},
\]
where
\[
x_{j}=\sum_{m=0}^{k-1}h^{m}x_{u,v}^{1,1+j}=\sum_{m=0}^{k-1}\left(-1\right)^{j-m}x_{u,v}^{1-m+k,1+j-m+k}\equiv\sum_{t=1}^{k-1}\frac{1}{k}\left(\eta^{-jt}-1\right)u_{t},
\]
\[
y_{j}=\sum_{m=0}^{k-1}h^{m}x_{u_{-2}\textbf{1},v}^{1,1+j}=\sum_{m=0}^{k-1}\left(-1\right)^{j-m}x_{u_{-2}\textbf{1},v}^{1-m+k,1+j-m+k}\equiv\sum_{t=1}^{k-1}\frac{1}{k}\left(\eta^{-jt}-1\right)w_{t}
\]
with
\[
u_{t}=-\text{Res}_{z}\sum_{j=1}^{k}\left(-1\right)^{j-1}\left(Y\left(u,z\right)v\frac{\left(1+z\right)^{\text{wt}u-1+\frac{t}{k}}}{z}\right)^{j},
\]
and
\[
w_{t}=-\text{Res}_{z}\sum_{j=1}^{k}\left(-1\right)^{j-1}\left(Y\left(u_{-2}\textbf{1},z\right)v\frac{\left(1+z\right)^{\text{wt}u+\frac{t}{k}}}{z}\right)^{j}
\]
where we use Remark \ref{remark}. Then
\begin{align*}
 & \phi\left(\overline{u}\circ_{g}\widetilde{v}\right)\\
 & =k\Delta_{k}\left(1\right)u\circ_{\sigma}v-\text{wt}u\sum_{t,j=1}^{k-1}\frac{1}{k}\left(\eta^{-jt}-1\right)k\Delta_{k}\left(1\right)\text{Res}_{z}Y\left(u,z\right)v\frac{\left(1+z\right)^{\text{wt}u-1+\frac{t}{k}}}{z}\\
 & -\sum_{t,j=1}^{k-1}\frac{1}{k}\left(\eta^{-jt}-1\right)k\Delta_{k}\left(1\right)\text{Res}_{z}Y\left(u_{-2}\textbf{1},z\right)v\frac{\left(1+z\right)^{\text{wt}u+\frac{t}{k}}}{z}.
\end{align*}

By the same arguments in \cite[Theorem 3.7]{DXY}, we obtain
\[
\phi\left(\overline{u}\circ_{g}\widetilde{v}\right)=k^{1-\text{wt}u}\text{Res}_{z}Y\left(e^{\sum_{j\in\mathbb{Z}_{+}}a_{j}(1+z)^{-j}L\left(j\right)}u,z\right)\Delta_{k}\left(1\right)v\left(1+z\right)^{\text{wt}u}\frac{1}{z^{2}}\in O\left(V\right).
\]
Thus $\phi\left(\overline{u}\circ_{g}\widetilde{v}\right)=\lambda u\circ_{\sigma}v\in O_{\sigma}\left(V\right)$
for some constant $\lambda.$

(3) Let $\widetilde{u}=\sum_{a=1}^{k}\left(-1\right)^{a-1}u^{a}\in\left(V^{\otimes k}\right)_{\bar{1}}^{0\ast},\overline{v}=\sum_{b=1}^{k}v^{b}\in\left(V^{\otimes k}\right)_{\bar{0}}^{0\ast}$
with $u\in V_{\bar{1}}$ and $v\in V_{\bar{0}}$. Using (\ref{U_s even odd})
and \cite[Lemma 3.2]{DXY}, by direct calculation we obtain
\begin{align*}
\widetilde{u}\circ_{g}\overline{v} & =\text{Res}_{z}\sum_{a,b=1}^{k}\left(-1\right)^{a-1}Y\left(u^{a},z\right)v^{b}\frac{\left(1+z\right)^{\text{wt}u}}{z^{2}}\\
 & =\sum_{j=1}^{k}\left(-1\right)^{j-1}\left(u\circ_{\sigma}v\right)^{j}+\text{wt}u\sum_{j=1}^{k-1}x_{j}+\sum_{j=1}^{k-1}y_{j}
\end{align*}
where
\[
x_{j}=\sum_{m=0}^{k-1}\left(-1\right)^{1-m-j}x_{u,v}^{1-m+k,1+j-m+k}\equiv\sum_{t=1}^{k-1}\frac{1}{k}\left(\eta^{-jt}-1\right)u_{t},
\]
\[
y_{j}=\sum_{m=0}^{k-1}\left(-1\right)^{1-m-j}x_{u_{-2}\textbf{1},v}^{1-m+k,1+j-m+k}\equiv\sum_{t=1}^{k-1}\frac{1}{k}\left(\eta^{-jt}-1\right)w_{t}
\]
with
\[
u_{t}=-\text{Res}_{z}\sum_{j=1}^{k}\left(-1\right)^{j-1}\left(Y\left(u,z\right)v\frac{\left(1+z\right)^{\text{wt}u-1+\frac{t}{k}}}{z}\right)^{j},
\]
and
\[
w_{t}=-\text{Res}_{z}\sum_{j=1}^{k}\left(-1\right)^{j-1}\left(Y\left(u_{-2}\textbf{1},z\right)v\frac{\left(1+z\right)^{\text{wt}u+\frac{t}{k}}}{z}\right)^{j}.
\]
By the same arguments in \cite[Theorem 3.7]{DXY}, we obtain
\[
\phi\left(\widetilde{u}\circ_{g}\overline{v}\right)=k^{1-\text{wt}u}\text{Res}_{z}Y\left(e^{\sum_{j\in\mathbb{Z}_{+}}a_{j}(1+z)^{-j}L\left(j\right)}u,z\right)\Delta_{k}\left(1\right)v\left(1+z\right)^{\text{wt}u}\frac{1}{z^{2}}\in O_{\sigma}\left(V\right).
\]
Thus $\phi\left(\widetilde{u}\circ_{g}\overline{v}\right)=\lambda u\circ_{\sigma}v\in O_{\sigma}\left(V\right)$
for some constant $\lambda.$

(4) Let $\widetilde{u}=\sum_{a=1}^{k}\left(-1\right)^{a+1}u^{a},\widetilde{v}=\sum_{b=1}^{k}\left(-1\right)^{b+1}v^{b}\in\left(V^{\otimes k}\right)_{\bar{1}}^{0\ast}$
with $u,v\in V_{\bar{1}}$. By using (\ref{U_s odd odd}) and \cite[Lemma 3.2]{DXY},
we obtain
\begin{align*}
\widetilde{u}\circ_{g}\widetilde{v} & =\text{Res}_{z}\sum_{a,b=1}^{k}\left(-1\right)^{a-1}Y\left(u^{a},z\right)\left(-1\right)^{b-1}v^{b}\frac{\left(1+z\right)^{\text{wt}u}}{z^{2}}\\
 & =\sum_{j=1}^{k}\left(u\circ_{\sigma}v\right)^{j}+\text{wt}u\sum_{j=1}^{k-1}x_{j}+\sum_{j=1}^{k-1}y_{j}
\end{align*}
where
\[
x_{j}=\sum_{a=1}^{k}\left(-1\right)^{j}x_{u,v}^{a,a+j}\equiv\sum_{t=1}^{k-1}\frac{1}{k}\left(\eta^{-jt}-1\right)u_{t},
\]
\[
y_{j}=\sum_{m=0}^{k-1}\left(-1\right)^{j}x_{u_{-2}\textbf{1},v}^{a,a+j}\equiv\sum_{t=1}^{k-1}\frac{1}{k}\left(\eta^{-jt}-1\right)w_{t}
\]
with
\[
u_{t}=-\text{Res}_{z}\sum_{j=1}^{k}\left(Y\left(u,z\right)v\frac{\left(1+z\right)^{\text{wt}u-1+\frac{t}{k}}}{z}\right)^{j},
\]
and
\[
w_{t}=-\text{Res}_{z}\sum_{j=1}^{k}\left(Y\left(u_{-2}\textbf{1},z\right)v\frac{\left(1+z\right)^{\text{wt}u+\frac{t}{k}}}{z}\right)^{j}.
\]
Then
\begin{align*}
 & \phi\left(\widetilde{u}\circ_{g}\widetilde{v}\right)\\
 & =k\Delta_{k}\left(1\right)u\circ v-\text{wt}u\sum_{t,j=1}^{k-1}\frac{1}{k}\left(\eta^{-jt}-1\right)k\Delta_{k}\left(1\right)\text{Res}_{z}Y\left(u,z\right)v\frac{\left(1+z\right)^{\text{wt}u-1+\frac{t}{k}}}{z}\\
 & -\sum_{t,j=1}^{k-1}\frac{1}{k}\left(\eta^{-jt}-1\right)k\Delta_{k}\left(1\right)\text{Res}_{z}Y\left(u_{-2}\textbf{1},z\right)v\frac{\left(1+z\right)^{\text{wt}u+\frac{t}{k}}}{z}.
\end{align*}

By the same arguments in \cite[Theorem 3.7]{DXY}, we obtain
\[
\phi\left(\widetilde{u}\circ_{g}\widetilde{v}\right)=k^{1-\text{wt}u}\text{Res}_{z}Y\left(e^{\sum_{j\in\mathbb{Z}_{+}}a_{j}(1+z)^{-j}L\left(j\right)}u,z\right)\Delta_{k}\left(1\right)v\left(1+z\right)^{\text{wt}u}\frac{1}{z^{2}}\in O_{\sigma}\left(V\right)
\]
and hence $\phi\left(\widetilde{u}\circ_{g}\widetilde{v}\right)=\lambda u\circ_{\sigma}v\in O_{\sigma}\left(V\right)$
for some constant $\lambda.$

Combining (1)-(4), we see that for any $x,y\in\left(V^{\otimes k}\right)^{0\ast}$,
$\phi\left(x\circ_{g}y\right)$$\in O_{\sigma}\left(V\right).$

Now we prove that $\phi$ is a homomorphism.

(i) For $\overline{u}=\sum_{a=1}^{k}u^{a},\overline{v}=\sum_{b=1}^{k}v^{b}\in\left(V^{\otimes k}\right)_{\bar{0}}^{0\ast}$
with $u,v\in V_{\bar{0}}$, we have
\begin{align*}
\overline{u}\ast_{g}\overline{v} & =\text{Res}_{z}Y\left(\sum_{a=1}^{k}u^{a},z\right)\left(\sum_{b=1}^{k}v^{b}\right)\frac{\left(1+z\right)^{\text{wt}u}}{z}\\
 & =\sum_{a\not=b}x_{u,v}^{a,b}+\sum_{j=1}^{k}\left(u\ast_{\sigma}v\right)^{j}\\
 & =\sum_{b-a=j,j=1,\cdots,k-1}x_{u,v}^{a,b}+\sum_{j=1}^{k}\left(u\ast_{\sigma}v\right)^{j}\\
 & =\sum_{a=1}^{k}\sum_{j=1}^{k-1}x_{u,v}^{a,a+j}+\sum_{j=1}^{k}\left(u\ast_{\sigma}v\right)^{j}\\
 & =\sum_{j=1}^{k-1}x_{j}+\sum_{j=1}^{k}\left(u\ast_{\sigma}v\right)^{j},
\end{align*}
where $x_{j}=\sum_{a=1}^{k}x_{u,v}^{a,a+j}$. By the same arguments
in \cite[Theorem 3.7]{DXY}, we obtain
\begin{align*}
 & \phi\left(\overline{u}\ast_{g}\overline{v}\right)\\
= & -\frac{1}{k}\sum_{t,j=1}^{k-1}\left(\eta^{-jt}-1\right)k\Delta_{k}\left(1\right)\text{Res}_{z}Y\left(u,z\right)v\frac{\left(1+z\right)^{\text{wt}u-1+\frac{t}{k}}}{z}+k\Delta_{k}\left(1\right)\left(u\ast_{\sigma}v\right)\\
= & k^{2}\text{Res}_{z}Y\left(\Delta_{k}\left(1\right)u,z\right)\Delta_{k}\left(1\right)v\left(1+z\right)^{\text{wt}v}\frac{1}{z}\\
= & \left(k\Delta_{k}\left(1\right)u\right)\ast_{\sigma}\left(k\Delta_{k}\left(1\right)v\right)\\
= & \phi\left(\overline{u}\right)\ast_{\sigma}\phi\left(\overline{v}\right).
\end{align*}

(ii) For $\overline{u}=\sum_{a=1}^{k}u^{a}\in\left(V^{\otimes k}\right)_{\bar{0}}^{0\ast}$,
$\widetilde{v}=\sum_{b=1}^{k}\left(-1\right)^{b-1}v^{b}\in\left(V^{\otimes k}\right)_{\bar{1}}^{0\ast}$
with $u\in V_{\bar{0}}$ and $v\in V_{\bar{1}}$. We have
\begin{align*}
\phi\left(\overline{u}\ast_{g}\widetilde{v}\right) & =\text{Res}_{z}Y\left(\sum_{a=1}^{k}u^{a},z\right)\left(\sum_{b=1}^{k}\left(-1\right)^{b-1}v^{b}\right)\frac{\left(1+z\right)^{\text{wt}u}}{z}\\
 & =\sum_{a\not=b}\left(-1\right)^{b-1}x_{u,v}^{a,b}+\sum_{j=1}^{k}\left(-1\right)^{j-1}\left(u\ast_{\sigma}v\right)^{j}\\
 & =\sum_{b-a=j,j=1,\cdots,k-1}\left(-1\right)^{b-1}x_{u,v}^{a,b}+\sum_{j=1}^{k}\left(-1\right)^{j-1}\left(u\ast_{\sigma}v\right)^{j}\\
 & =\sum_{a=1}^{k}\sum_{j=1}^{k-1}\left(-1\right)^{a+j-1}x_{u,v}^{a,a+j}+\sum_{j=1}^{k}\left(-1\right)^{j-1}\left(u\ast_{\sigma}v\right)^{j}\\
 & =\sum_{j=1}^{k-1}y_{j}+\sum_{j=1}^{k}\left(-1\right)^{j-1}\left(u\ast_{\sigma}v\right)^{j},
\end{align*}
where $y_{j}=\sum_{a=1}^{k}\left(-1\right)^{a+j-1}x_{u,v}^{a,a+j}$.
By the same arguments in \cite[Theorem 3.7]{DXY}, we obtain
\begin{align*}
 & \phi\left(\overline{u}\ast_{g}\widetilde{v}\right)\\
= & -\frac{1}{k}\sum_{t,j=1}^{k-1}\left(\eta^{-jt}-1\right)k\Delta_{k}\left(1\right)\text{Res}_{z}Y\left(u,z\right)v\frac{\left(1+z\right)^{\text{wt}u-1+\frac{t}{k}}}{z}+k\Delta_{k}\left(1\right)\left(u\ast_{\sigma}v\right)\\
= & k^{2}\text{Res}_{z}Y\left(\Delta_{k}\left(1\right)u,z\right)\Delta_{k}\left(1\right)v\left(1+z\right)^{\text{wt}v}\frac{1}{z}\\
= & \left(k\Delta_{k}\left(1\right)u\right)\ast_{\sigma}\left(k\Delta_{k}\left(1\right)v\right)\\
= & \phi\left(\overline{u}\right)\ast_{\sigma}\phi\left(\widetilde{v}\right).
\end{align*}

(iii) For $\widetilde{u}=\sum_{a=1}^{k}\left(-1\right)^{a-1}u^{a}\in\left(V^{\otimes k}\right)_{\bar{1}}^{0\ast},\overline{v}=\sum_{b=1}^{k}v^{b}\in\left(V^{\otimes k}\right)_{\bar{0}}^{0\ast}$
with $u\in V_{\bar{1}}$ and $v\in V_{\bar{0}}$, we have
\begin{align*}
\widetilde{u}\ast_{g}\overline{v} & =\text{Res}_{z}Y\left(\sum_{a=1}^{k}\left(-1\right)^{a-1}u^{a},z\right)\left(\sum_{b=1}^{k}v^{b}\right)\frac{\left(1+z\right)^{\text{wt}u}}{z}\\
 & =\sum_{a\not=b}\left(-1\right)^{a-1}x_{u,v}^{a,b}+\sum_{j=1}^{k}\left(-1\right)^{j-1}\left(u\ast_{\sigma}v\right)^{j}\\
 & =\sum_{b-a=j,j=1,\cdots,k-1}\left(-1\right)^{a-1}x_{u,v}^{a,b}+\sum_{j=1}^{k}\left(-1\right)^{j-1}\left(u\ast_{\sigma}v\right)^{j}\\
 & =\sum_{a=1}^{k}\sum_{j=1}^{k-1}\left(-1\right)^{a-1-j}x_{u,v}^{a,a+j}+\sum_{j=1}^{k}\left(-1\right)^{j-1}\left(u\ast_{\sigma}v\right)^{j}\\
 & =\sum_{j=1}^{k-1}y_{j}+\sum_{j=1}^{k}\left(-1\right)^{j-1}\left(u\ast_{\sigma}v\right)^{j},
\end{align*}
where $y_{j}=\sum_{a=1}^{k}\left(-1\right)^{a-1-j}x_{u,v}^{a,a+j}$.
Using (\ref{U_s even odd}) and \cite[Lemma 3.2]{DXY}, we obtain
\[
\phi\left(\widetilde{u}\ast_{g}\overline{v}\right)=k\Delta_{k}\left(1\right)\text{Res}_{z}Y\left(u,z\right)v\left(1+z\right)^{\text{wt}u}\left(\frac{1}{z}+\sum_{t=1}^{k-1}\frac{\left(1+z\right)^{\text{wt}u-1+\frac{t}{k}}}{z}\right).
\]
By the same arguments in \cite[Theorem 3.7]{DXY}, we obtain $\phi\left(\widetilde{u}\ast_{g}\overline{v}\right)=\phi\left(\widetilde{u}\right)\ast_{\sigma}\phi\left(\overline{v}\right)$.

(iv) For $\widetilde{u}=\sum_{a=1}^{k}\left(-1\right)^{a+1}u^{a},\widetilde{v}=\sum_{b=1}^{k}\left(-1\right)^{b+1}v^{b}\in\left(V^{\otimes k}\right)_{\bar{1}}^{0\ast}$
with $u,v\in V_{\bar{1}}$, we have
\begin{align*}
\widetilde{u}\ast_{g}\widetilde{v} & =\text{Res}_{z}Y\left(\sum_{a=1}^{k}\left(-1\right)^{a-1}u^{a},z\right)\left(\sum_{b=1}^{k}\left(-1\right)^{b-1}v^{b}\right)\frac{\left(1+z\right)^{\text{wt}u}}{z}\\
 & =\sum_{a\not=b}\left(-1\right)^{a+b}x_{u,v}^{a,b}+\sum_{j=1}^{k}\left(u\ast_{\sigma}v\right)^{j}\\
 & =\sum_{b-a=j,j=1,\cdots,k-1}\left(-1\right)^{a+b}x_{u,v}^{a,b}+\sum_{j=1}^{k}\left(u\ast_{\sigma}v\right)^{j}\\
 & =\sum_{a=1}^{k}\sum_{j=1}^{k-1}\left(-1\right)^{j}x_{u,v}^{a,a+j}+\sum_{j=1}^{k}\left(u\ast_{\sigma}v\right)^{j}\\
 & =\sum_{j=1}^{k-1}y_{j}+\sum_{j=1}^{k}\left(u\ast_{\sigma}v\right)^{j},
\end{align*}
where $y_{j}=\sum_{a=1}^{k}\left(-1\right)^{j}x_{u,v}^{a,a+j}$. As
before, we have
\begin{align*}
 & \phi\left(\widetilde{u}\ast_{g}\widetilde{v}\right)\\
= & -\frac{1}{k}\sum_{t,j=1}^{k-1}\left(\eta^{-jt}-1\right)k\Delta_{k}\left(1\right)\text{Res}_{z}Y\left(u,z\right)v\frac{\left(1+z\right)^{\text{wt}u-1+\frac{t}{k}}}{z}+k\Delta_{k}\left(1\right)\left(u\ast_{\sigma}v\right).
\end{align*}
Then by the arguments in \cite[Theorem 3.7]{DXY}, one can prove $\phi\left(\widetilde{u}\ast_{g}\widetilde{v}\right)=\phi\left(\widetilde{u}\right)\ast_{\sigma}\phi\left(\widetilde{v}\right)$.

Combining (i)-(iv), we get that $\phi$ is a homomorphism.

Since $\Delta_{k}\left(1\right)$ is an invertible operator, we define
\begin{alignat*}{1}
\psi: & A_{\sigma}\left(V\right)\to A_{g}\left(V^{\otimes k}\right)\\
 & u\mapsto\frac{1}{k}\sum_{a=1}^{k}\left(\Delta_{k}\left(1\right)^{-1}u\right)^{a}.
\end{alignat*}
Thus $\psi$ is the inverse of $\phi$ and $\phi$ defines an isomorphism.
\end{proof}
For arbitrary element $g\in S_{k}$, one has $g=g_{1}\cdots g_{s}g_{s+1}\cdots g_{s+t}$
which is a product of disjoint cycles with $g_{1},\cdots,g_{s}$ odd
cycles and $g_{s+1},\cdots,g_{s+t}$ even cycles. If $U,W$ are vertex
operator algebras with automorphisms $f,h$ respectively, then $f\otimes h$
is an automorphism of $U\otimes W$ and $A_{f\otimes h}\left(U\otimes W\right)$
is isomorphic to $A_{f}(U)\otimes_{\mathbb{C}}A_{h}(W).$ Using this
fact and Theorems \ref{mainthm-odd} and \ref{mainthm-even}, we conclude
this paper with the following corollary.

{\begin{corollary} Let $g=g_{1}\cdots g_{s}g_{s+1}\cdots g_{s+t}\in S_{k}$
be a product of disjoint cycles with $g_{1},\cdots,g_{s}$ odd cycles
and $g_{s+1},\cdots,g_{s+t}$ even cycles. Then $A_{g}\left(V^{\otimes k}\right)$
is isomorphic to $\left(A\left(V\right)\right){}^{\otimes s}\otimes\left(A_{\sigma}\left(V\right)\right)^{\otimes t}$
where the tensor is over $\mathbb{C}.$ \end{corollary}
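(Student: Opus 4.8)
The plan is to peel off one disjoint cycle at a time and invoke the two main theorems. Write $g=g_1\cdots g_{s+t}$, where $g_i$ is a $k_i$-cycle supported on a set $S_i\subseteq\{1,\dots,k\}$, the $S_i$ pairwise disjoint with $\sum_i k_i=k$, and where $k_1,\dots,k_s$ are odd and $k_{s+1},\dots,k_{s+t}$ are even. Choose a permutation $\pi\in S_k$ that carries each block $S_i$ onto the consecutive block $\{k_1+\cdots+k_{i-1}+1,\dots,k_1+\cdots+k_i\}$ and carries $g_i$ to the standard cycle $g_i'=(1\ 2\ \cdots\ k_i)$ on that block, so that $\pi g\pi^{-1}=g_1'\cdots g_{s+t}'$. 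Since $\pi$ acts on $V^{\otimes k}$ as an automorphism of vertex operator superalgebras and $\pi g=(\pi g\pi^{-1})\pi$, it induces an isomorphism of associative algebras $A_g(V^{\otimes k})\xrightarrow{\ \sim\ }A_{\pi g\pi^{-1}}(V^{\otimes k})$. After relabeling we may therefore assume $g=g_1'\otimes g_2'\otimes\cdots\otimes g_{s+t}'$ acting on
\[
V^{\otimes k}\ \cong\ V^{\otimes k_1}\otimes V^{\otimes k_2}\otimes\cdots\otimes V^{\otimes k_{s+t}},
\]
with $g_i'$ the standard $k_i$-cycle on the $i$-th tensor factor.

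Next I would apply, by induction on the number of factors, the fact recalled just before the statement: for vertex operator superalgebras $U,W$ with automorphisms $f,h$, the map $f\otimes h$ is an automorphism of $U\otimes W$ and $A_{f\otimes h}(U\otimes W)\cong A_f(U)\otimes_{\mathbb C}A_h(W)$. This yields
\[
A_g\bigl(V^{\otimes k}\bigr)\ \cong\ \bigotimes_{i=1}^{s+t}A_{g_i'}\bigl(V^{\otimes k_i}\bigr),
\]
the tensor product being over $\mathbb C$.

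Finally I would evaluate each factor. For $1\le i\le s$ the integer $k_i$ is odd, so Theorem \ref{mainthm-odd} gives $A_{g_i'}(V^{\otimes k_i})\cong A(V)$ (the case $k_i=1$ being the trivial identity $A(V)=A(V)$); for $s+1\le i\le s+t$ the integer $k_i$ is even, so Theorem \ref{mainthm-even} gives $A_{g_i'}(V^{\otimes k_i})\cong A_\sigma(V)$. Substituting these into the previous display gives $A_g(V^{\otimes k})\cong(A(V))^{\otimes s}\otimes(A_\sigma(V))^{\otimes t}$, which is the assertion.

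The one point that deserves care is the bookkeeping in the first step: one must verify that conjugation by the slot-reordering permutation $\pi$ — whose action on $V^{\otimes k}$ carries the Koszul signs of (\ref{tensor product-super sign}) and (\ref{g action}) — really produces the \emph{external} tensor product $g_1'\otimes\cdots\otimes g_{s+t}'$ rather than some sign-twisted variant, and that $\pi$ is compatible with the regrouping of tensor factors. Granting this and the super version of the recalled Zhu-algebra tensor-product identity (whose right-hand side carries the usual super sign), the argument is purely formal.
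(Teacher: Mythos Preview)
Your proposal is correct and follows essentially the same approach as the paper: the paper likewise derives the corollary directly from the tensor-product identity $A_{f\otimes h}(U\otimes W)\cong A_f(U)\otimes_{\mathbb C}A_h(W)$ together with Theorems \ref{mainthm-odd} and \ref{mainthm-even}. You are simply more explicit about the conjugation step and the Koszul-sign bookkeeping, which the paper leaves implicit.
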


\textbf{\footnotesize{}{}\hspace{1.8em}C. Dong}{\footnotesize{}{}:
Department of Mathematics, University of California Santa Cruz, CA
95064 USA; }\texttt{dong@ucsc.edu}

\textbf{\footnotesize{}{}F. Xu}{\footnotesize{}{}: Department of
Mathematics, University of California, Riverside, CA 92521 USA; }\texttt{xufeng@math.ucr.edu}

\textbf{\footnotesize{}{}N. Yu}{\footnotesize{}{}: School of Mathematical
Sciences, Xiamen University, Fujian, 361005, CHINA;} \texttt{ninayu@xmu.edu.cn}
 }

\begin{thebibliography}{BDM}
\bibitem[Ba]{Ba} K. Barron, Twisted modules for tensor product vertex
operator superalgebras and permutation automorphisms of odd order.
Lie algebras, Lie superalgebras, vertex algebras and related topics,
45--79, \emph{Proc. Sympos. Pure Math}. \textbf{92}, Amer. Math.
Soc., Providence, RI, 2016.

\bibitem[Bo]{Bo} R. E. Borcherds, Vertex algebras, Kac-Moody algebras,
and the Monster, \emph{Proc. Natl. Acad. Sci. USA} \textbf{83} (1986),
3068--3071

\bibitem[BDM]{BDM} K. Barron, C. Dong, G. Mason, Twisted sectors
for tensor product vertex operator algebras associated to permutation
groups, \emph{Comm. Math. Phys.} \textbf{227 }(2002), no. 2, 349--384.

\bibitem[BW]{BW} K. Barron, N. Vander Werf, Permutation-twisted modules
for even order cycles acting on tensor product vertex operator superalgebras,
\emph{Internat. J. Math}. \textbf{25} (2014), no. 2, 1450018, 35 pp.

\bibitem[DL]{DL}C. Dong, J. Lepowsky, Generalized Vertex Algebras
and Relative Vertex Operators, \emph{Progress in Math}. Vol.\textbf{
112}, Birkhauser Boston, 1993.

\bibitem[DLM]{DLM1} C. Dong, H. Li, G. Mason, Twisted representations
of vertex operator algebras, \emph{Math. Ann.} \textbf{310} (1998),
no. 3, 571--600.

\bibitem[DXY]{DXY} C. Dong, F. Xu, N. Yu, Permutation orbifolds and
associative algebras, \emph{Sci. China Math.} \textbf{65} (2022),
no. 2, 259--268.

\bibitem[DZ]{DZ} C. Dong, Z. Zhao, Twisted representations of vertex
operator superalgebras, \emph{Commun. Contemp. Math.} \textbf{8} (2006),
no. 1, 101--121.

\bibitem[FLM]{FLM} I. B. Frenkel, J. Lepowsky, A. Meurman, Vertex
Operator Algebras and the Monster, \emph{Pure and Applied Math.} Vol.
\textbf{134}, Academic Press, 1988.

\bibitem[KW]{KW} V. Kac, W. Wang, Vertex operator superalgebras and
their representations, Mathematical aspects of conformal and topological
field theories and quantum groups (South Hadley, MA, 1992), 161--191,
\emph{Contemp. Math. }\textbf{175}, Amer. Math. Soc., Providence, RI, 1994.

\bibitem[X]{X} X. Xu, Introduction to vertex operator superalgebras
and their modules, \emph{Mathematics and its Applications}, \textbf{456},
Kluwer Academic Publishers, Dordrecht, 1998. xvi+356 pp.

\bibitem[Z]{Z} Y. Zhu, Modular invariance of characters of vertex
operator algebras, \emph{J. Amer. Math. Soc.}\textbf{ 9 }(1996), 237--302.
\end{thebibliography}
\end{document}